\newtheorem{thm}{Theorem}[section]
\newtheorem{pro}{Proposition}[section]
\newtheorem{lem}[thm]{Lemma}
\newtheorem{rem}[thm]{Remark}
    \newcommand{\nrm}[1]{\left\| #1 \right\|}
\newcommand{\qref}[1]{(\ref{#1})} 
    \def\x{\mbox{\boldmath $x$}}
    \def\dt{\Delta t}
    \def\exac{{\mbox \tiny e}}
\def\dspace{\displaystyle \vspace{.05in}}
\begin{document}
\title{\textbf{A Fourier Pseudospectral Method 
for the ``Good" Boussinesq Equation with Second Order Temporal Accuracy}}

\author{\normalsize  {Kelong Cheng$^1$}, {Wenqiang Feng$^{2}$}, {Sigal Gottlieb$^3$}, {Cheng Wang$^3$}\\
\normalsize{  $^1$School of Science, Southwest University of Science and Technology,}\\
\normalsize { Mianyang, Sichuan 621010, P. R. China} \\
\normalsize{ $^2$Department of Mathematics, University of Tennessee, Knoxville, TN 37996, USA} \\
\normalsize $^3$Mathematics Department,  University of Massachusetts, North Dartmouth, MA 02747, USA.}

\date{}
\maketitle

\begin{abstract}
In this paper, we discuss the nonlinear stability and convergence of a fully discrete 
Fourier pseudospectral method coupled with a specially designed second order 
time-stepping for the numerical solution of the ``good" Boussinesq equation. 
Our analysis improves the existing results presented in 
earlier literature in two ways. First, an $\ell^\infty (0,T^*; H^2)$ 
convergence for the solution and $\ell^\infty (0,T^*; \ell^2)$ convergence for the time-derivative 
of the solution are obtained in this paper, instead of the $\ell^\infty (0,T^*; \ell^2)$ 
convergence for the solution and the $\ell^\infty (0,T^*; H^{-2})$ convergence for the time-derivative, 
given in \cite{Fru}. In addition, the stability and convergence of this method is shown to be unconditional 
for the time step in terms of the spatial grid size, compared with a severe restriction time step restriction
$\dt  \le C h^2$ reported in \cite{Fru}.

\end{abstract}

{Keywords:} {Good Boussinesq equation, fully discrete Fourier
 pseudospectral method, aliasing error, stability and convergence }

{AMS subject classification:}  {65M12, 65M70}
\maketitle

\allowdisplaybreaks


\section{Introduction}
\setcounter{page}{1}

The soliton-producing nonlinear wave equation is a topic of significant scientific interest.
One commonly used example  is  the so-called ``good" Boussinesq (GB) equation
\begin{equation}
u_{tt}=-u_{xxxx}+u_{xx}+(u^p)_{xx} ,  \quad 
  \mbox{with an integer $p \ge 2$} .   
\label{bsq}
\end{equation}
It is similar to the well-known Korteweg-de Vries (KdV) equation;  
a balance between dispersion and nonlinearity leads to the existence of solitons. The GB equation and its various extensions have been investigated by many authors. For instance, a closed form
solution for the two soliton interaction of Eq. \qref{bsq} was obtained by Manoranjan {\it{et al.}} in \cite{Man1} and a few numerical experiments were performed based on the
Petrov-Galerkin method with linear ``hat" functions. In \cite{Man2}, it was shown that the GB equation possesses
a highly complicated mechanism for the solitary waves interaction. Ortega and  Sanz-Serna \cite{Ort} 
discussed  nonlinear stability and
convergence of some simple finite difference schemes for the numerical solution of this equation. 
More analytical and numerical works related to GB equations can be found in the literature, for example 
\cite{Att, Bra1, Bra2, Cie1, Cie2, Farah10, Fru, Got, Linares1995, Oh13, Pan, TM1991}.

In this paper, we consider the GB equation \qref{bsq},  with a periodic boundary condition 
over an 1-D domain $\Omega = (0,L)$ and initial data 
$u(x,0)=u^0(x)$, $u_t(x,0)=v^0(x)$, both of which are $L$-periodic. 
It is assumed that a unique, periodic, smooth enough solution exists for (\ref{bsq}) 
over the time interval $(0,T)$. This
$L$-periodicity assumption is reasonable if the solution to \qref{bsq} decays exponentially  outside $[0,L]$. 

Due to the periodic boundary condition, the Fourier collocation (pseudospectral) differentiation is a natural choice to obtain the optimal
spatial accuracy.  
There has been a wide and varied literature on
the development of spectral and pseudospectral schemes. 
For instance, the stability analysis for linear time-dependent problems
can be found in \cite{GO1977, MMO1978}, etc, based on 
eigenvalue estimates. 
Some pioneering works for nonlinear equations were initiated by 
Maday and Quarteroni \cite{MQ1981, MQ1982a, MQ1982b} for steady-state spectral solutions.  Also note the analysis of one-dimensional 
conservation laws by Tadmor and collaborators  \cite{CDT1993,GTM2001,
MOT1993,ET1989,ET1990,ET1993,ET2004}, semi-discrete 
viscous Burgers' equation and Navier-Stokes equations by E \cite{WE1992,WE1993}, 
the Galerkin spectral method for Navier-Stokes equations led by Guo \cite{Guo1995, GH2007, 
GLM1995, GTM2001} and Shen \cite{DGS2001, GS2001}, 
and the fully discrete (discrete both  in space and time)
pseudospectral method applied to viscous Burgers' equation 
in \cite{Got} by Gottlieb and Wang and \cite{BQ1986} 
by Bressan and Quarteroni, etc. 


Most of the theoretical developments in  nonlinear spectral and pseudospectral schemes are related to a 
parabolic PDE, in which the diffusion term plays a key role in the 
stability and convergence analysis. 
Very few works have analyzed a fully discrete 
pseudospectral method applied to a nonlinear hyperbolic PDE. 
Among the existing ones, it is worth mentioning Frutos  {\it{et al.}}'s 
work \cite{Fru} on the nonlinear analysis of a second order 
(in time) pseudospectral scheme for the GB equation (with $p=2$).
However, as the authors point out in their remark on page 119, these theoretical results were not optimal:
 {\it{`` ... our energy norm is an $L^2$-norm of $u$ combined with a negative norm of $u_t$. 
 This should be compared with the energy norm in \cite{LS1988}: there, no integration with respect to $x$ is 
necessary and convergence is proved in $H^2$ for $u$ and $L^2$ for $u_t$''}}.
The  difficulties in the analysis  are due to the  absence of 
a dissipation mechanism in the GB equation (\ref{bsq}), which makes the nonlinear error terms much 
more challenging to analyze than that of a parabolic equation. 
The presence of a second order spatial derivative 
for the nonlinear term leads to an essential difficulty of numerical 
error estimate in a higher order Sobolev norm. 
In addition to the lack of optimal numerical error estimate, \cite{Fru} 
also imposes a severe time step restriction: $\dt \le C h^2$ (with $C$ 
a fixed constant), in the nonlinear stability analysis. Such a constraint 
becomes very restrictive for a fine numerical mesh and  
leads to a high computational cost.

In this work we propose a second order (in time) 
pseudospectral scheme for the the GB equation (\ref{bsq}) with an 
alternate approach, and 
provide a novel nonlinear analysis. In more detail, an $\ell^\infty (0,T^*; H^2)$ 
convergence for $u$ and $\ell^\infty (0,T^*; \ell^2)$ convergence for 
$u_t$ are derived, compared with the $\ell^\infty (0,T^*; \ell^2)$ convergence 
for $u$ and $\ell^\infty (0,T^*; H^{-2})$ convergence for $u_t$, 
as reported in \cite{Fru}. Furthermore, such a convergence is 
unconditional (for the time step $\dt$ in terms of space grid size $h$) 
so that the severe time step constraint $\dt \le C h^2$ is avoided. 

  The methodology of the proposed second order temporal discretization 
is very different from that in \cite{Fru}. To overcome the difficulty 
associated with the second order temporal derivative in the hyperbolic 
equation, we introduce a new variable $\psi$ to approximate $u_t$, 
which greatly facilitates the numerical implementation.  On the other hand, the corresponding second order consistency analysis becomes non-trivial 
because of an $O(\dt^2)$ numerical error between the centered difference of $u$ and the mid-point average of $\psi$. Without a careful treatment, 
such an $O (\dt^2)$ numerical error might seem to introduce a reduction of temporal accuracy, because of the second order time derivative involved in the equation.  To overcome this difficulty, we perform a higher order consistency analysis by an asymptotic expansion; as a result, the constructed approximate solution satisfies the numerical scheme with a higher order truncation error.  
Furthermore, a projection of the exact solution onto the Fourier 
space leads to an optimal regularity requirement. 

For the nonlinear stability and convergence analysis, we have to obtain a 
direct estimate of the (discrete) $H^2$ norm of the nonlinear numerical 
error function. This estimate relies on the
aliasing error control lemma for pseudospectral approximation to 
nonlinear terms, which was proven in a recent work \cite{Got}. 
That's the key reason why we are able to overcome the key difficulty 
in the nonlinear estimate and obtain an $\ell^\infty (0,T^*; H^2)$ 
convergence for $u$ and $\ell^\infty (0,T^*; \ell^2)$ convergence for $u_t$. 
We prove that  the proposed numerical scheme 
is fully consistent (with a higher order expansion), stable 
and convergent in the $H^2$ norm up to some fixed final time $T^*$.  
In turn,  the maximum norm bound of the numerical solution is 
automatically obtained, because of the $H^2$ error estimate and the corresponding Sobolev embedding. Therefore, the inverse inequality in 
the stability analysis is not needed and any scaling law between 
$\dt$ and $h$ is avoided, compared with the $\dt \le C h^2$ constraint 
reported in \cite{Fru}. 


This paper is outlined as follows. In Section 2 we review the
Fourier spectral and pseudospectral differentiation, recall an aliasing error control lemma (proven in \cite{Got}), and present an alternate second 
order (in time) pseudospectral scheme for the GB equation (\ref{bsq}). 
 In Section 3, the consistency analysis of the scheme is studied in detail.  
The stability and convergence analysis is reported in Section 4. 
A simple numerical result is presented in Section 5. 
Finally, some concluding remarks are made in Section 6.


\section{The Numerical Scheme}

\subsection{Review of Fourier spectral and pseudospectral approximations}

For $f(x) \in L^2 (\Omega)$, $\Omega= (0,L)$,  with Fourier series  
\begin{eqnarray} 
 f(x) =   \sum_{l=-\infty}^{\infty}  
 \hat{f}_l {\rm e}^ {2 \pi {\rm i} l x/L },  \quad   
  \mbox{with} \quad 
    \hat{f}_{l}  =  \int_{\Omega}  f(x)  
    {\rm e}^{-2 \pi {\rm i} l x/L } dx , 
\end{eqnarray}
its truncated series is defined as the projection onto the space 
${\cal B}^N$ of trigonometric polynomials in $x$ of degree up to $N$,  
given by 
\begin{equation} 
  {\cal P}_N f(x) =   \sum_{l=-N}^{N}  
 \hat{f}_{l} {\rm e}^ {2 \pi {\rm i} l x/L } .  
\end{equation}

To obtain a pseudospectral approximation at a given set of points, 
an interpolation operator ${\cal I}_N$ is introduced. 
Given a uniform numerical grid with $(2N+1)$ points 
and a discrete vector function ${\mathbf f} $ where ${\mathbf f}_i = f(x_i) $, for each spatial point
 $x_i$.  The Fourier interpolation of the function is defined by 
\begin{equation} 
   \left({\cal I}_N f \right)(x) =   \sum_{l=-N}^{N}   
  (\hat{f}_c^N)_{l} {\rm e}^ {2 \pi {\rm i} l x/L }, \label{spectral-interpolation} 
\end{equation}
 where the $(2N+1)$ pseudospectral coefficients $ (\hat{f}_c^N)_l$ are computed based on  
 the interpolation condition  $  f(x_i) =  \left( {\cal I}_N f \right)(x_i)$
on the $2N+1$ equidistant points \cite{Boyd2001, CHQZ2007, HGG2007}. 
 These collocation coefficients can be efficiently computed using the fast Fourier transform (FFT).
Note that the pseudospectral coefficients are not equal to the actual Fourier coefficients;
 the difference between them is known as the aliasing error.
In general, ${\cal P}_N f(x)  \ne {\cal I}_N f(x) $, and even
${\cal P}_N f(x_i)  \ne {\cal I}_N f(x_i) $,
except of course in the case that $ f \in {\cal B}^N$.

The Fourier series and the formulas for its projection and interpolation 
allow one to easily take derivative by simply multiplying the appropriate Fourier coefficients  $(\hat{f}_c^N)_{l} $ by $2 l \pi {\rm i}/L$.  
Furthermore, we can take subsequent derivatives in the same way, so that
differentiation in physical space is accomplished via multiplication in Fourier space.
As long as $f$ and all is derivatives (up to $m$-th order) are continuous and periodic on $\Omega$, the convergence of the derivatives of the projection and interpolation 
is given by 
\begin{eqnarray}  
 \|\partial^k f(x) - \partial^k {\cal P}_N f(x) \| 
  &\leq&  C \| f^{(m)} \| h^{m-k} , \quad 
\mbox{for} \, \, \, 0 \le k \le m , \nonumber 
\\
  \|\partial^k f(x) - \partial^k {\cal I}_N f(x) \| 
  &\leq&  C \| f \|_{H^m} h^{m-k} , \quad 
\mbox{for} \, \, \, 0 \le k \le m ,  \, m > \frac{d}{2} ,   
   \label{spectral-approximation}
\end{eqnarray}
in which $\| \cdot \|$ denotes the $L^2$ norm.  
For more details, see the  discussion of approximation theory by Canuto and  Quarteroni \cite{CQ1982} . 

For any collocation approximation to the function $f(x)$ at the points $x_i$ 
\begin{equation} 
  f(x_i) = \left( {\cal I}_N f \right)_{i}=  \sum_{l=-N}^{N}  
  (\hat{f}_c^N)_{l} {\rm e}^{ 2 \pi {\rm i} l x_i}, \label{spectral-1}  
\end{equation}
one can define discrete differentiation operator ${\cal D}_{N}$ 
operating on the 
vector of grid values ${\mathbf f} = f(x_i) $. 
In practice, one may compute the collocation coefficients $(\hat{f_c^N)}_{l}$ via FFT, and then multiply them 
 by the correct values (given by $2 l \pi {\rm i}$) and perform the inverse FFT.  
 Alternatively, we can view the differentiation operator  ${\cal D}_{N}$ 
as a matrix, 
and the above process can be seen as a matrix-vector multiplication.
The same process is performed for the second and fourth derivatives $\partial_x^2$, $\partial_x^4$, where this time the collocation coefficients are multiplied
by $( - 4 \pi^2 l^2/L^2 )$ and $( 16 \pi^4 l^4/L^4 )$, respectively.  
In turn, the differentiation matrix can be applied for multiple times, 
i.e. the 
vector ${\mathbf f} $ is multiplied by $ {\cal D}^2_{N} $ and 
${\cal D}_N^4$, respectively. 

Since the pseudospectral differentiation is taken at a point-wise level, 
a discrete $L^2$ norm and inner product need to be introduced to facilitate the analysis. 
Given any periodic grid functions ${\mathbf f} $ and ${\mathbf g} $ (over the numerical grid), 
we note that these are simply vectors and define the discrete $L^2$ inner product and  norm 
\begin{eqnarray} 
  \left\| {\mathbf f} \right\|_2 = \sqrt{ \left\langle {\mathbf f} , 
  {\mathbf f}  \right\rangle } ,  \quad \mbox{with} \quad 
  \left\langle {\mathbf f} , {\mathbf g} \right\rangle  
  = \frac{1}{2N+1} \sum_{i=0}^{2 N}   {\mathbf f}_{i} {\mathbf g}_{i} . 
  \label{spectral-inner product-1}
\end{eqnarray}
The following summation by parts (see \cite{Got}) will be of use:
\begin{eqnarray} 
   \left\langle {\mathbf f} , {\cal D}_N {\mathbf g} \right\rangle  
  =  -  \left\langle {\cal D}_N {\mathbf f} , {\mathbf g} \right\rangle  ,  
  \quad 
  \left\langle {\mathbf f} , {\cal D}_N^2 {\mathbf g} \right\rangle  
  =  -  \left\langle {\cal D}_N {\mathbf f} , 
  {\cal D}_N {\mathbf g} \right\rangle  ,   \quad 
   \left\langle {\mathbf f} , {\cal D}_N^4 {\mathbf g} \right\rangle  
  =   \left\langle {\cal D}_N^2 {\mathbf f} , 
  {\cal D}_N^2 {\mathbf g} \right\rangle . \label{spectral-inner product-2} 
\end{eqnarray}

\subsection{An aliasing error control estimate in Fourier pseudospectral 
approximation} 

This  lemma, established in  \cite{Got}, allows us to bound the 
aliasing error for the nonlinear term, and will be critical to our analysis. 
For any function $\varphi(x)$ in the space ${\cal B}^{p N}$,  
its collocation coefficients $\hat{q}^N_{l}$ are computed based on the $2N+1$ equidistant points. In turn, ${\cal I}_N \varphi(x)$ is given by the continuous expansion based on these coefficients:
\begin{equation} 
  {\cal I}_N \varphi(x)= \sum_{l= -N}^N \hat{q}^N_{l} 
  {\rm e}^{2 \pi {\rm i} l x/L } .  
\end{equation} 
Since  $\varphi(x) \in {\cal B}^{p N}$, we have 
${\cal I}_N \varphi(x) \ne {\cal P}_N \varphi(x) $ due to the aliasing error. 

The following lemma enables us to obtain an $H^m$ bound of the 
interpolation of the nonlinear term; the detailed proof can be found in 
\cite{Got}. 

\begin{lem} \label{aliasing error}
  For any $\varphi \in {\cal B}^{p N}$ (with $p$ an integer) in dimension $d$, we have 
\begin{equation} 
  \left\| {\cal I}_N \varphi \right\|_{H^k}  
  \le  \left( \sqrt{p} \right)^d  \left\|  \varphi \right\|_{H^k} . 
   \label{spectral-projection-4}
\end{equation} 
\end{lem}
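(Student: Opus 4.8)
The plan is to prove the estimate through an explicit aliasing analysis of the interpolation operator, carrying out the argument first in dimension $d=1$ and then indicating the tensor-product generalization. Writing $\varphi(x) = \sum_{|l| \le pN} \hat{\varphi}_l {\rm e}^{2\pi {\rm i} l x/L}$, the starting point is the classical aliasing formula: since two wavenumbers are indistinguishable on the $(2N+1)$-point grid precisely when they differ by a multiple of $2N+1$, the collocation coefficient at a mode $|l| \le N$ is
\begin{equation}
\hat{q}^N_l = \sum_{\substack{m \in \mathbb{Z} \\ |l + m(2N+1)| \le pN}} \hat{\varphi}_{l + m(2N+1)} .
\end{equation}
First I would count the terms in this sum. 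The contributing wavenumbers $l+m(2N+1)$ are spaced $2N+1$ apart and must lie in the range $[-pN,pN]$ of length $2pN < p(2N+1)$, so at most $p$ values of $m$ can contribute for each fixed $l$.

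Next I would apply Cauchy--Schwarz to this sum of at most $p$ terms, giving $|\hat{q}^N_l|^2 \le p \sum_m |\hat{\varphi}_{l+m(2N+1)}|^2$, and then pass to the Sobolev norm via Parseval, $\|\mathcal{I}_N\varphi\|_{H^k}^2 = \sum_{|l| \le N} w_l |\hat q^N_l|^2$, where $w_l = (1 + (2\pi l/L)^2)^k$ is the $H^k$ symbol (any fixed Parseval normalization cancels, since it appears identically on both sides). The decisive observation --- and the step I expect to be the main obstacle --- is a monotonicity property of this weight under aliasing: for $|l| \le N$ and any $m \ne 0$ one has $|l| < |l + m(2N+1)|$, because a nonzero multiple of $2N+1$ pushes $l$ strictly outside $[-N,N]$. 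Hence $w_l \le w_{l + m(2N+1)}$, so the weight attached to the low target frequency never exceeds that of the high source frequency it was aliased from. This is exactly what makes the resulting inequality point in the favorable direction.

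With the monotonicity in hand, I would replace $w_l$ by $w_{l+m(2N+1)}$ inside the double sum and repackage it: by the division algorithm every integer $n$ with $|n| \le pN$ has a unique representation $n = l + m(2N+1)$ with $|l| \le N$, so the admissible pairs $(l,m)$ biject with $\{n : |n| \le pN\}$. Therefore
\begin{equation}
\|\mathcal{I}_N \varphi\|_{H^k}^2 \le p \sum_{|l| \le N} \sum_m w_{l+m(2N+1)} |\hat{\varphi}_{l+m(2N+1)}|^2 = p \sum_{|n| \le pN} w_n |\hat{\varphi}_n|^2 = p \|\varphi\|_{H^k}^2 ,
\end{equation}
which is the claim for $d=1$. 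Finally, in dimension $d$ the same reasoning applies coordinatewise: the admissible aliasing multi-indices number at most $p^d$, Cauchy--Schwarz produces the factor $p^d$, the weight $w_{\mathbf{l}} = (1 + |2\pi \mathbf{l}/L|^2)^k$ remains nondecreasing under each coordinate shift since $|\mathbf{l}|^2 = \sum_j l_j^2 \le \sum_j (l_j + (2N+1)m_j)^2$, and the bijection is the tensor product of the one-dimensional ones. This yields $\|\mathcal{I}_N\varphi\|_{H^k}^2 \le p^d \|\varphi\|_{H^k}^2$, that is, $\|\mathcal{I}_N\varphi\|_{H^k} \le (\sqrt{p})^d \|\varphi\|_{H^k}$, completing the proof.
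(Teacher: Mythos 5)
Your proof is correct. The paper itself does not prove this lemma --- it defers to the reference \cite{Got} --- and your argument (the aliasing decomposition of the collocation coefficients, the count of at most $p$ aliased modes per retained frequency, Cauchy--Schwarz, and the monotonicity of the $H^k$ Fourier weight under aliasing shifts, tensorized coordinatewise for general $d$) is essentially the same aliasing-control proof given in that reference.
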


\subsection{Formulation of the numerical scheme}

We propose the following fully discrete second order (in time) scheme for the equation (\ref{bsq}): 
\begin{equation}\label{numerical}
\left\{\begin{array}{rl} 
  \dspace 
\frac{\psi^{n+1}-\psi^{n}}{\dt}
=& -D_N^4 \left( \frac{u^{n+1}+u^n}{2} \right)  
  + D_N^2 \left( \frac{u^{n+1}+u^n}{2} \right)  \\
  \dspace 
& +D_N^2\left(\frac{3}{2}(u^n)^p -\frac{1}{2}(u^{n-1})^p \right),  \\
 \dspace 
\frac{u^{n+1}-u^n}{\dt}
=& \frac{\psi^{n+1}+\psi^{n}}{2},
\end{array} \right. 
\end{equation}
where $\psi$ is a second order approximation to $u_t$ 
and $D_N$ denotes the discrete differentiation operator.

\begin{rem} 
  With a substitution $\psi^{n+1} = \frac{2 (u^{n+1} - u^n)}{\dt}
- \psi^n$, the scheme (\ref{numerical}) can be reformulated
as a closed equation for $u^{n+1}$:
\begin{eqnarray}
  \frac{2 u^{n+1} }{\dt^2} 
  + \frac12 \left( D_N^4 - D_N^2 \right) u^{n+1} 
    &=& D_N^2\left(\frac{3}{2}(u^n)^p-\frac{1}{2}(u^{n-1})^p \right)
     - \frac12 \left( D_N^4 - D_N^2 \right) u^n  \nonumber 
\\
  && 
     +  \frac{\frac{2 u^n}{\dt} + 2 \psi^n}{\dt} .
    \label{scheme-2nd-3}
\end{eqnarray}
Since the treatment of the nonlinear term is  fully explicit, 
this resulting implicit scheme requires only a  linear solver. 
Furthermore, a detailed calculation 
shows that all the eigenvalues of the linear operator on the left hand side are positive, 
and so the unique unconditional  solvability of the proposed 
scheme (\ref{numerical}) is assured. 
In practice, the FFT can be  utilized to efficiently  obtain the numerical solutions.
\end{rem}

\begin{rem} 
  An introduction of the variable $\psi$ not only facilitates the numerical 
implementation, but also improves the numerical stability, due to the fact 
that only two consecutive time steps $t^n$, $t^{n+1}$, are involved in 
the second order approximation to $u_{tt}$. In contrast, three time steps 
$t^{n+1}$, $t^n$ and $t^{n-1}$ are involved in the numerical approximation 
to the second order temporal derivative as presented in the earlier 
work \cite{Fru} (with $p=2$): 
\begin{eqnarray} 
   \frac{u^{n+1}- 2 u^{n} + u^{n-1} }{\dt^2}
 =- \frac14 D_N^4 \left( u^{n+1} +2 u^n + u^{n-1}  \right)  
  + D_N^2 u^n +D_N^2\left( (u^n)^2 \right) . 
   \label{2nd order scheme-1991} 
\end{eqnarray}

A careful numerical analysis in \cite{Fru} shows that the numerical stability 
for (\ref{2nd order scheme-1991}) is only valid under a severe time step constraint $\dt \le C h^2$, since this scheme is evaluated at the time step $t^n$. On the other hand, the special structure of our proposed 
scheme (\ref{numerical}) results in an unconditional stability and 
convergence for a fixed final time, as will be presented in later analysis. 
\end{rem}

\section{The Consistency Analysis}
\setcounter{equation}{0}

In this section we establish a truncation error estimate for the fully discrete scheme (\ref{numerical}) for the GB equation (\ref{bsq}).  
A finite Fourier projection of the exact solution is taken to the GB equation (\ref{bsq}) and a local truncation error is derived.  Moreover, we perform a higher order consistency analysis in time, through an addition of a correction term, so that the constructed of approximate solution satisfies the numerical scheme with higher order temporal accuracy. This approach avoids a key difficulty associated with the accuracy reduction in time due to the appearance of the second in time temporal derivative. 

\subsection{Truncation error analysis for $U_N$} 
Given the domain $\Omega = (0,L)$, the uniform mesh grid $(x_i)$, 
$0 \le i \le 2N$, and the exact solution $u_\exac$,  we denote 
$U_N$ as its projection into ${\cal B}^N$: 
\begin{equation}
U_N (x,t) := {\cal P}_N u_\exac (x,t)  .
\label{consistency-Phi-2}
\end{equation}
The following approximation estimates are clear: 
\begin{eqnarray} 
  &&
  \left\|  U_N - u_\exac \right\|_{L^\infty (0,T^*; H^r) }  
  \le  C  h^m \left\|  u_\exac \right\|_{L^\infty (0,T^*; H^{m+r}) } ,  \quad 
  \mbox{for} \, \, \, r \ge 0 , \label{consistency-Phi-3-1}
\\
  &&
  \left\|  \partial_t^k \left( U_N - u_\exac \right) \right\|_{H^r}  
  \le  C  h^m \left\|  \partial_t^k u_\exac \right\|_{H^{m+r}} ,  \quad 
  \mbox{for} \, \, \, r \ge 0 , \, \, 0 \le k \le 4 ,   \label{consistency-Phi-3-2} 
\end{eqnarray}
in which the second inequality comes from the fact that 
$ \partial_t^k U_N$ is the truncation of $\partial_t^k u_\exac$ 
for any $k \ge 0$, since projection and differentiation commute: 
\begin{equation} 
   \frac{\partial^k}{\partial t^k}  U_N (\x, t) 
 = \frac{\partial^k}{\partial t^k}  {\cal P}_N u_\exac (x,t)  
 = {\cal P}_N  \frac{\partial^k  u_\exac (x,t) }{\partial t^k} 
  . \label{consistency-Phi-3-3}
\end{equation} 
As a direct consequence, the following linear estimates are straightforward: 
\begin{eqnarray}  
  \left\|  \partial_t^2 \left( U_N - u_\exac \right) \right\|_{L^2}  
  &\le&  C  h^m \left\|  \partial_t^2 u_\exac \right\|_{H^m}   ,  
  \label{consistency-Phi-4-1}
\\
  \left\|  \partial_x^2 \left( U_N - u_\exac \right) \right\|_{L^2}  
  &\le&  C  h^m \left\|  u_\exac \right\|_{H^{m+2} }  ,  \quad 
  \left\|  \partial_x^4 \left( U_N - u_\exac \right) \right\|_{L^2}  
  \le  C  h^m \left\|  u_\exac \right\|_{H^{m+4} }  .   \label{consistency-Phi-4-2}
\end{eqnarray} 
On the other hand, a discrete $\nrm{ \cdot }_2$ estimate for these terms are needed in the local truncation derivation. To overcome this difficulty, we observe that 
\begin{equation}  \label{consistency-Phi-4-3}
  \nrm{  \partial_t^2 \left( U_N - u_\exac \right) }_2 
  = \nrm{  {\cal I}_N \left( \partial_t^2 \left( U_N - u_\exac \right)  \right) }_{L^2} 
  \le \nrm{  \partial_t^2 \left( U_N - u_\exac \right)  }_{L^2}  
  + \nrm{  \partial_t^2 \left( {\cal I}_N u_\exac 
  - u_\exac \right)  }_{L^2} , 
\end{equation} 
in which the second step comes from the fact that ${\cal I}_N \partial_t^2 U_N 
= \partial_t^2 U_N$, since $\partial_t^2 U_N \in {\cal B}^N$. The first term has an estimate given by (\ref{consistency-Phi-4-1}), while the second term could be bounded by 
\begin{equation} 
   \nrm{  \left( \partial_t^2 \left( {\cal I}_N u_\exac 
  - u_\exac \right)  \right) }_{L^2} 
  =  \nrm{  {\cal I}_N \left( \partial_t^2 u_\exac \right) 
  - \partial_t^2 u_\exac }_{L^2} 
  \le  C h^m  \nrm{  \partial_t^2 u_\exac }_{H^m} , 
  \label{consistency-Phi-4-4}
\end{equation}
as an application of (\ref{spectral-approximation}). In turn, its combination with (\ref{consistency-Phi-4-3}) and (\ref{consistency-Phi-4-1}) yields 
\begin{equation} 
   \nrm{  \partial_t^2 \left( U_N - u_\exac \right) }_2
  \le  C h^m  \nrm{  \partial_t^2 u_\exac }_{H^m} . 
  \label{consistency-Phi-4-5}
\end{equation}
Using similar arguments, we also arrive at 
\begin{equation}  
  \left\|  \partial_x^2 \left( U_N - u_\exac \right) \right\|_2  
  \le  C  h^m \left\|  u_\exac \right\|_{H^{m+2} }  ,  \quad 
  \left\|  \partial_x^4 \left( U_N - u_\exac \right) \right\|_2 
  \le  C  h^m \left\|  u_\exac \right\|_{H^{m+4} }  .   
  \label{consistency-Phi-4-6}
\end{equation}

For the nonlinear term, we begin with the following expansion: 
\begin{eqnarray} 
  \partial_x^2 \left( u_\exac^p \right) 
  &=&  p  \left(  (p-1) u_\exac^{p-2} (u_\exac)_x^2 + u_\exac^{p-1} (u_\exac)_{xx} \right) ,   \quad 
  \mbox{which in turn gives}   \nonumber 
\\
  \partial_x^2 \left( u_\exac^p - (U_N)^p  \right)  
  &=&  p  \Bigl(  (p-1) U_N^{p-2} ( u_\exac + U_N )_x  ( u_\exac - U_N )_x   \nonumber 
\\
  && 
  + (p-1)  ( u_\exac - U_N )  (u_\exac)_x^2 \sum_{k=0}^{p-3} u_\exac^k U_N^{p-3-k}   \nonumber 
\\
  && 
  +  U_N^{p-1}  ( u_\exac - U_N )_{xx} 
  + ( u_\exac - U_N )  (u_\exac)_{xx} \sum_{k=0}^{p-2} u_\exac^k U_N^{p-2-k}  \Bigr) . 
   \label{consistency-Phi-5-1}
\end{eqnarray} 
Subsequently, its combination with (\ref{consistency-Phi-3-1}) implies that 
\begin{eqnarray} 
  &&
   \left\|  \partial_x^2 \left( u_\exac^p - (U_N)^p  \right)   \right\|_{L^2}  
  \nonumber 
\\
  &\le& 
  C  \Bigl( \nrm{ U_N }_{L^\infty}^{p-2}    
    \cdot \left\| u_\exac  + U_N \right\|_{W^{1,\infty} } 
    \cdot  \left\|  u_\exac - U_N   \right\|_{H^1}  
     +  \left\|  U_N   \right\|_{L^\infty}^{p-1}  \cdot 
   \left\|   u_\exac - U_N   \right\|_{H^2}   \nonumber 
\\
  &&
    + \left\|  u_\exac - U_N   \right\|_{L^\infty}  
  \cdot \left( \nrm{ u_\exac }_{L^\infty}^{p-2} + \nrm{ U_N }_{L^\infty}^{p-2} \right)
   \cdot  \left( \left\|  u_\exac  \right\|_{H^2} + \nrm{ u_\exac }_{W^{1,4} }^2 \right)  \Bigr)  \nonumber 
\\
  &\le& 
  C  \Bigl(   \nrm{ U_N }_{H^1}^{p-2}  
  \cdot \left\| u_\exac  + U_N \right\|_{H^2 } 
    \cdot  \left\|  u_\exac - U_N   \right\|_{H^1}  
   +  \left\|  U_N   \right\|_{H^1}^{p-1}  \cdot 
   \left\|   u_\exac - U_N   \right\|_{H^2}   \nonumber
\\
  && 
  + \left\|  u_\exac - U_N   \right\|_{H^1}  
  \cdot \left( \nrm{ u_\exac }_{H^1}^{p-2} + \nrm{ U_N }_{H^1}^{p-2}  \right) 
   \cdot  \left( \left\|  u_\exac  \right\|_{H^2} + \nrm{ u_\exac }_{H^2 }^2 \right)  \Bigr) 
  \nonumber 
\\
  &\le& 
  C  \left(   \left\| u_\exac  \right\|_{H^2}^p +  \left\|  U_N \right\|_{H^2 }^p  \right) 
    \cdot  \left\|  u_\exac - U_N   \right\|_{H^2}    \nonumber 
\\
  &\le& 
   C   \left\| u_\exac  \right\|_{H^2}^p  \cdot  \left\|  u_\exac - U_N   \right\|_{H^2}  
   \le  C  h^m \left\| u_\exac   \right\|_{H^2}^p 
  \cdot  \left\|  u_\exac \right\|_{H^{m+2} }  ,  
   \label{consistency-Phi-5-2}
\end{eqnarray} 
in which an 1-D Sobolev embedding was used in the second step. 

The following interpolation error estimates can be derived in 
a similar way, based on \qref{spectral-approximation}:  
\begin{eqnarray} 
  \nrm{  \partial_x^2 ( u_\exac^p ) 
   - {\cal I}_N \left(  \partial_x^2 ( u_\exac^p ) \right) }_{L^2}  
  &\le&  C h^m  \nrm{  \partial_x^2 ( u_\exac^p )  }_{H^m} 
  \le  C  h^m \nrm{ u_\exac  }_{H^2}^p 
  \cdot  \nrm{  u_\exac } _{H^{m+2} }  ,  \label{consistency-Phi-5-3}
\\
   \nrm{  \partial_x^2 ( U_N^p ) 
   - {\cal I}_N \left(  \partial_x^2 ( U_N^p ) \right) }_{L^2}  
  &\le&  C h^m  \nrm{  \partial_x^2 ( U_N^p )  }_{H^m} 
  \le  C  h^m \nrm{ u_\exac  }_{H^2}^p 
  \cdot  \nrm{  u_\exac } _{H^{m+2} }  .   \label{consistency-Phi-5-4}
\end{eqnarray} 
In turn, a combination of \qref{consistency-Phi-5-2}-\qref{consistency-Phi-5-4} 
implies the following estimate for the nonlinear term 
\begin{eqnarray} 
   \nrm{  \partial_x^2 \left( u_\exac^p - (U_N)^p  \right)  }_2 
  &=&  \nrm{  {\cal I}_N  \left( \partial_x^2 \left( u_\exac^p 
  - (U_N)^p  \right)  \right) }_{L^2}  \nonumber 
\\
  &\le&    
   \nrm{  \partial_x^2 \left( u_\exac^p - (U_N)^p  \right)   } _{L^2}  
   + \nrm{  \partial_x^2 ( u_\exac^p ) 
   - {\cal I}_N \left(  \partial_x^2 ( u_\exac^p ) \right) }_{L^2}    \nonumber 
\\
  && 
  + \nrm{  \partial_x^2 ( U_N^p ) 
   - {\cal I}_N \left(  \partial_x^2 ( U_N^p ) \right) }_{L^2}  
  \le  C  h^m \nrm{ u_\exac  }_{H^2}^p 
  \cdot  \nrm{  u_\exac } _{H^{m+2} }  .   \label{consistency-Phi-5-5}
\end{eqnarray} 

   By observing \qref{consistency-Phi-4-5}, \qref{consistency-Phi-4-6}, \qref{consistency-Phi-5-5}, we conclude that $U_N$ satisfies the original GB equation \qref{bsq} up to an $O (h^m)$ (spectrally accurate) truncation error: 
\begin{eqnarray} 
    \partial_t^2 U_N = - \partial_x^4 U_N + \partial_x^2 U_N 
  + \partial_x^2 (U_N^p) + \tau_0 ,  \quad \mbox{with} \, \, \, 
  \nrm{ \tau_0 }_2 \le C  h^m \left( \nrm{ u_\exac  }_{H^2}^p + 1 \right) 
  \cdot  \nrm{  u_\exac } _{H^{m+4} }  .   \label{consistency-Phi-6}
\end{eqnarray}

Moreover, we define the following profile, a second order (in time) 
approximation to $\partial_t u_\exac$: 
\begin{equation}\label{psi-approx}
\Psi_N (x,t) := \partial_t U_N (x,t) - \frac{\Delta t^2}{12} \partial_t^3 U_N (x,t) .
\end{equation}
For any function $G=G(x,t)$, given $n>0$ , we define $G^n(x) := G(x, n \dt)$.


\subsection{Truncation error analysis in time} 

  For simplicity of presentation, we assume $T = K \dt$ with an integer $K$. 
The following two preliminary estimates are excerpted from a recent work 
\cite{Lowengrub}, which will be useful  in later consistency analysis.

\begin{pro}\cite{Lowengrub}  
\label{Prop-A.0}
For $f \in H^3 (0,T)$, we have
\begin{equation}\label{est-1d-time-1}
\nrm{\tau^t f }_{\ell^2 (0,T)}  \le C \dt^m \nrm{f}_{H^{m+1} (0,T)}  ,  \quad
\mbox{with} \quad \tau^t f ^n = \frac{f^{n+1} - f^n}{\dt} - f' (t^{n+1/2}) ,
\end{equation}
for $0 \le m \le 2$, where $C$ only depends on $T$, $\nrm{\ \cdot \ }_{\ell^2 (0,T)}$ is a discrete $L^2$ norm (in time) given by $\nrm{g}_{\ell^2 (0,T)} = \sqrt{\Delta t \sum_{n=0}^{K-1} \left(g^n\right)^2}$.
\end{pro}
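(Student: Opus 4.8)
The plan is to recognize $\tau^t f^n$ as the local truncation error of the symmetric difference quotient centered at $t^{n+1/2}$, and to exploit the fact that this functional annihilates every polynomial of degree up to two. A direct substitution confirms this: the functional vanishes on $1$, on $t$, and on $t^2$, while on $t^3$ it produces a residual of size $\tfrac14\dt^2$. Thus the leading consistency behaviour is governed by the third derivative, which is exactly the regularity assumed for the sharp case $m=2$.

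For that main case I would expand $f(t^{n+1})$ and $f(t^n)$ by Taylor's theorem with integral remainder about the midpoint $t^{n+1/2}$. Subtracting the two expansions, the value terms and the second-derivative terms cancel, the first-derivative terms combine to give exactly $\dt\, f'(t^{n+1/2})$, and what remains is a sum of integral remainders of the form $\int_{I_n} w(\xi) f'''(\xi)\,d\xi$ over $I_n := [t^n,t^{n+1}]$, with a weight $w$ supported on $I_n$ and bounded by $C\dt^2$. Dividing by $\dt$ and applying Cauchy--Schwarz on $I_n$ gives the pointwise-in-$n$ estimate $|\tau^t f^n| \le C\dt^{3/2}\nrm{f'''}_{L^2(I_n)}$, since $\nrm{w}_{L^2(I_n)} \le C\dt^2\cdot\dt^{1/2}$ and the prefactor $1/\dt$ lowers the power by one. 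Multiplying by $\dt$, summing over $n$, and using that the subintervals $I_n$ tile $(0,T)$ so that $\sum_n \nrm{f'''}_{L^2(I_n)}^2 = \nrm{f'''}_{L^2(0,T)}^2$, I obtain $\nrm{\tau^t f}_{\ell^2(0,T)}^2 \le C\dt^4 \nrm{f'''}_{L^2(0,T)}^2$, which is the claimed bound with $m=2$ after a square root and the trivial bound of $\nrm{f'''}_{L^2}$ by $\nrm{f}_{H^3}$.

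The cases $m=0,1$ would follow by the identical mechanism, truncating the Taylor expansion one or two orders earlier: the functional still annihilates constants (resp.\ linears), so only $f'$ (resp.\ $f''$) survives in the remainder, and a Peano-kernel computation shows the associated weight is bounded in $L^2(I_n)$ by $C\dt^{m-1/2}$, which propagates through the same summation to $\nrm{\tau^t f}_{\ell^2(0,T)} \le C\dt^m \nrm{f}_{H^{m+1}(0,T)}$. The step I expect to be the main obstacle is making this weight estimate uniform in $n$ and correctly scaled in $\dt$; the clean way to do it is a Bramble--Hilbert/scaling argument, mapping each $I_n$ to a fixed reference interval where the remainder kernel is an $n$-independent object and tracking the powers of $\dt$ generated by the change of variables. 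A secondary delicate point is the endpoint case $m=0$, where the functional contains the pointwise sample $f'(t^{n+1/2})$ and the associated Peano kernel carries a Dirac mass; there the sampling term must instead be controlled by a discrete trace (midpoint Riemann-sum) estimate, which is available because the standing hypothesis $f\in H^3(0,T)\hookrightarrow C^2$ supplies more than enough pointwise regularity.
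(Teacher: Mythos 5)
You should first note that the paper never proves this proposition at all: it is quoted verbatim from the reference \cite{Lowengrub}, so your argument can only be measured against the standard proof, which is exactly the Peano-kernel/Taylor-remainder route you take. For the substantive cases $m=1,2$ your outline is correct and complete: the functional indeed annihilates quadratics (and produces the residual $\tfrac14 \dt^2$ on $t^3$), the Taylor expansion about $t^{n+1/2}$ with integral remainder leaves kernels supported on $I_n := [t^n,t^{n+1}]$ and pointwise bounded by $C\dt^m$, Cauchy--Schwarz gives $|\tau^t f^n| \le C \dt^{m-1/2} \nrm{f^{(m+1)}}_{L^2(I_n)}$, and summing over the intervals, which tile $(0,T)$, yields $\nrm{\tau^t f}_{\ell^2(0,T)} \le C \dt^m \nrm{f}_{H^{m+1}(0,T)}$. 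One bookkeeping slip: your claim that ``the associated weight is bounded in $L^2(I_n)$ by $C\dt^{m-1/2}$'' is off by one power of $\dt$ --- the kernel itself has $L^2(I_n)$ norm $C\dt^{m+1/2}$, and only after dividing by $\dt$ does one get $\dt^{m-1/2}$; your explicit $m=2$ computation has it right, so this is a misstatement rather than an error in the mechanism.

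The genuine gap is the endpoint $m=0$, and it is worse than the ``secondary delicate point'' you describe: no midpoint-Riemann-sum argument can close it, because the inequality as literally stated is false for $m=0$. The obstruction is that the sampled value $f'(t^{n+1/2})$ is not controlled by $\nrm{f}_{H^1}$. Concretely, take $f$ smooth with $f'$ a bump of height $1$ and width $\epsilon \ll \dt$ centered at $t^{1/2}$ and vanishing elsewhere; then $\nrm{\tau^t f}_{\ell^2(0,T)} \to \sqrt{\dt}$ while $\nrm{f}_{H^1(0,T)} \to 0$ as $\epsilon \to 0$, so no constant $C=C(T)$ can work. Invoking $f \in H^3 \hookrightarrow C^2$, as you propose, does give the sample pointwise meaning, but any bound it produces is in terms of $\nrm{f'}_{L^\infty}$ or $\nrm{f}_{H^2}$, i.e.\ the constant would depend on $f$ through a norm the statement does not permit. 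What can actually be proved is the corrected bound $\nrm{\tau^t f}_{\ell^2(0,T)} \le C \left( \nrm{f}_{H^1(0,T)} + \dt \nrm{f}_{H^2(0,T)} \right)$: the difference-quotient part satisfies $\bigl|\tfrac{f^{n+1}-f^n}{\dt}\bigr| \le \dt^{-1/2}\nrm{f'}_{L^2(I_n)}$, and the sampled part is handled by the scaled trace inequality $\dt \, |g(t^{n+1/2})|^2 \le 2 \nrm{g}_{L^2(I_n)}^2 + \dt^2 \nrm{g'}_{L^2(I_n)}^2$ applied to $g = f'$ on each $I_n$. So for $m=0$ you should either prove this weaker form --- which is all the paper needs, since it invokes the $m=0$ case only for a term carrying a $\dt^2$ prefactor --- or state explicitly that the quoted endpoint requires $H^2$ rather than $H^1$ on the right-hand side.
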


\begin{pro}\cite{Lowengrub}
\label{Prop-A.1}
For $f \in H^2 (0,T)$, we have
	\begin{eqnarray}
  &&
  \nrm{D_{t/2}^2 f }_{\ell^2 (0,T)}  :=
  \left( \dt \sum_{n=0}^{K-1} \left( D_{t/2}^2 f^{n+1/2} \right)^2 \right)^{\frac12}
 \le C \nrm{f}_{H^2 (0,T)}  ,  \label{est-1d-time-2-1}
\\
  &&
  \nrm{D_t^2 f }_{\ell^2 (0,T)}  :=
  \left( \dt \sum_{n=0}^{K-1} \left( D_t^2 f^n \right)^2 \right)^{\frac12}
 \le C \nrm{f}_{H^2 (0,T)}  ,  \label{est-1d-time-2-2}
\\
  &&\mbox{with}  \quad D_{t/2}^2 f^{n+1/2}  = \frac{4 \left( f^{n+1} - 2 f (\  \cdot\ , t^{n+1/2} ) + f^n \right)}{\Delta t^2} , \quad D_t^2 f^n  = \frac{f^{n+1} - 2 f^n + f^{n-1} }{\Delta t^2} ,  \nonumber
\end{eqnarray}
where $C$ only depends on $T$.
\end{pro}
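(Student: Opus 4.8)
The plan is to write each discrete second difference as a weighted integral of $f''$ via Taylor's theorem with integral remainder, and then to pass from the resulting pointwise bound to the $\ell^2$ bound by a weighted Cauchy--Schwarz inequality followed by a clean summation over the time intervals.

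First I would record the underlying identity. Since $f \in H^2(0,T)$ in one space (here, time) dimension, $f$ is continuous and $f'$ is absolutely continuous with $f'' \in L^2(0,T)$, so the pointwise samples $f^{n+1}$, $f^n$, $f^{n-1}$ and $f(\cdot, t^{n+1/2})$ are all well defined. Integrating twice gives, for any center $t$ and half-width $h$ with $[t-h,t+h] \subset [0,T]$,
\begin{equation}
f(t+h) - 2 f(t) + f(t-h) = \int_0^h (h-s)\bigl( f''(t+s) + f''(t-s) \bigr)\, ds .
\end{equation}
This is the single ingredient that genuinely requires $f \in H^2$ rather than pointwise smoothness; everything downstream is elementary.

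For \qref{est-1d-time-2-1} I would apply the identity with $t = t^{n+1/2}$ and $h = \dt/2$, so that the left-hand bracket equals $f^{n+1} - 2 f(\cdot, t^{n+1/2}) + f^n$. Applying Cauchy--Schwarz with the weight $(h-s)$, using $\int_0^{h}(h-s)\,ds = \dt^2/8$, and then $(a+b)^2 \le 2(a^2+b^2)$ together with $h - s \le \dt/2$, I obtain the pointwise estimate
\begin{equation}
\bigl( D_{t/2}^2 f^{n+1/2} \bigr)^2 \le \frac{2}{\dt} \int_{t^n}^{t^{n+1}} \bigl( f''(\tau) \bigr)^2 \, d\tau ,
\end{equation}
where the change of variables $\tau = t^{n+1/2} \pm s$ merges the two half-interval contributions into a single integral over $[t^n, t^{n+1}]$. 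Multiplying by $\dt$ and summing over $0 \le n \le K-1$, the integration windows are disjoint and tile $(0,T)$ exactly, so the sum collapses to $2\nrm{f''}_{L^2(0,T)}^2 \le 2\nrm{f}_{H^2(0,T)}^2$; this yields \qref{est-1d-time-2-1} with an absolute constant $C = \sqrt{2}$.

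The bound \qref{est-1d-time-2-2} proceeds along identical lines, now with $t = t^n$ and $h = \dt$, leading to $( D_t^2 f^n )^2 \le \dt^{-1} \int_{t^{n-1}}^{t^{n+1}} ( f'' )^2\, d\tau$. The only structural difference is that the consecutive windows $[t^{n-1}, t^{n+1}]$ overlap, so on summation each subinterval of $(0,T)$ is counted at most twice; this keeps the final constant absolute and does not obstruct the estimate. I expect the sole real nuisance to be endpoint bookkeeping: $D_t^2 f^n$ at $n=0$ formally invokes $f^{-1}$ outside $(0,T)$, so I would either restrict the summation to indices whose three samples all lie in $[0,T]$, or extend $f$ across the endpoints by an $H^2$-stable reflection. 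Either remedy affects only the value of $C$ (which the statement already permits to depend on $T$), not the structure of the argument.
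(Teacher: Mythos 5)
Your argument is correct, but note that the paper itself contains no proof of this proposition: it is quoted from \cite{Lowengrub} as a ready-made ingredient, so there is no in-paper derivation to compare against. Your route---Taylor's theorem with integral remainder (valid for $f \in H^2(0,T)$ since $f'$ is absolutely continuous), a Cauchy--Schwarz inequality weighted by $(h-s)$, and then summation over the time windows (disjoint tiling for \qref{est-1d-time-2-1}, at-most-double overlap for \qref{est-1d-time-2-2})---is the standard argument for estimates of this type and is essentially how they are obtained in the cited reference; your explicit constants ($C=\sqrt{2}$ in both cases, modulo the endpoint fix) check out. Two small points deserve care. First, the endpoint issue you flag for \qref{est-1d-time-2-2} is genuine and is really a defect of the statement as printed: with the sum starting at $n=0$, the term $D_t^2 f^0$ invokes $f^{-1}=f(-\dt)$, which is undefined; restricting the sum to $n\ge 1$ is the clean remedy, and it is all the paper ever needs, since the scheme only uses the level $t^{n-1}$ for $n\ge 1$. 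Second, ``an $H^2$-stable reflection'' should be stated more carefully: the naive even reflection $f(|t|)$ has a derivative jump at $t=0$ unless $f'(0)=0$, hence is generally not in $H^2$; one needs a Hestenes-type higher-order reflection or a Stein extension operator, whose norm may depend on $T$---which the statement permits, since $C$ is allowed to depend on $T$. With either remedy the proof is complete.
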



  The following theorem is the desired consistency result. To simplify the 
presentation below, for the constructed solution $(U_N, \psi_N)$, we define its vector grid function 
$(U^n, \Psi^n) = {\cal I} (U_N, \psi_N)$ as its interpolation: 
$U^n_i = U_N^n (x_i, t^n)$, $\Psi^n_i = \Psi_N^n (x_i, t^n)$.  

\begin{thm}\label{thm1}
Suppose the unique periodic solution for equation (\ref{bsq}) 
satisfies the following regularity assumption
\begin{equation}\label{Bos-regularity}
    u_\exac \in H^4 (0,T; L^2) \cap L^\infty (0,T; H^{m+4}) \cap H^2 (0,T; H^4) . 
\end{equation}
Set $(U_N, \Psi_N)$ as the approximation solution constructed 
by \qref{consistency-Phi-2}, \qref{psi-approx} and let $(U, \Psi)$ as its discrete interpolation. Then we have 
\begin{equation}\label{numerical2}
\left\{\begin{array}{rl}
  \dspace 
\frac{\Psi^{n+1}-\Psi^{n}}{\Delta t}
&=-D_N^4 (\frac{U^{n+1}+U^n}{2}) + D_N^2(\frac{U^{n+1}+U^n}{2})\\
  \dspace 
& +D_N^2 \left(\frac{3}{2}(U^n)^p-\frac{1}{2}(U^{n-1})^p \right)+\tau_1^n,\\
  \dspace 
\frac{U^{n+1}-U^n}{\Delta t}
&=\frac{\Psi^{n+1} + \Psi^{n}}{2}+\Delta t \tau_2^n,
\end{array}\right.
\end{equation}
where $\tau_i^k$  satisfies 
	\begin{equation}
\nrm{\tau_i}_{\ell^2\left(0,T; \ell^2 \right)} := \left( \dt \sum_{k=0}^{K} \nrm{\tau_i^{k}}^2_2 \right)^{\frac12} \le M \left( \dt^2 + h^m \right) ,  
  \quad i = 1, 2 , 
	\label{discrete-norm-truncation}
	\end{equation}
in which $M$ only depends on the regularity of the exact solution $u_\exac$. 
\end{thm}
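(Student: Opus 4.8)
The plan is to substitute the constructed grid functions $(U,\Psi)$ into the scheme \qref{numerical}, read off $\tau_1^n$ and $\tau_2^n$ as the resulting residuals, and control them by Taylor expansion about the half-integer time level $t^{n+1/2}$. The one structural simplification I would exploit throughout is that $U_N,\Psi_N\in{\cal B}^N$, so interpolation is exact and $D_N^2,D_N^4$ reproduce $\partial_x^2,\partial_x^4$ on these functions at the grid points; the only place aliasing enters is the nonlinear term, where $(U_N^n)^p\in{\cal B}^{pN}$. The backbone of the leading-order cancellation is the spatially consistent identity \qref{consistency-Phi-6}, read at $t^{n+1/2}$, which says $U_N$ solves the PDE up to the spectrally small $\tau_0$.

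I would treat the second equation first. Taylor expanding $U_N^{n\pm}$ and $\Psi_N^{n\pm}$ about $t^{n+1/2}$ gives $\dt^{-1}(U_N^{n+1}-U_N^n)=\partial_t U_N+\tfrac{\dt^2}{24}\partial_t^3 U_N+O(\dt^4)$, while the midpoint average $\tfrac12(\Psi_N^{n+1}+\Psi_N^n)$ produces $\partial_t U_N+\tfrac{\dt^2}{8}\partial_t^3 U_N$ from the $\partial_t U_N$ part and $-\tfrac{\dt^2}{12}\partial_t^3 U_N$ from the correction term in \qref{psi-approx}. Since $\tfrac18-\tfrac1{12}=\tfrac1{24}$, the two $O(\dt^2)$ contributions match exactly and the raw residual is $O(\dt^4)$; this is precisely the role of the $-\tfrac{\dt^2}{12}\partial_t^3 U_N$ term, without which the residual would be $O(\dt^2)$ and, after division by $\dt$ in the definition of $\tau_2$, only $O(\dt)$. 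To reach the stated $\ell^2(0,T;\ell^2)$ bound I would rewrite the half-step differences exactly using Proposition \ref{Prop-A.0} and Proposition \ref{Prop-A.1}, so the temporal remainder is bounded by $C\dt^2$ times a time-Sobolev norm of $\partial_t U_N$ and $\partial_t^3 U_N$; since $\partial_t^k U_N={\cal P}_N\partial_t^k u_\exac$, these are controlled by \qref{Bos-regularity}.

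For the first equation I would expand the left side as $\dt^{-1}(\Psi_N^{n+1}-\Psi_N^n)=\partial_t^2 U_N-\tfrac{\dt^2}{24}\partial_t^4 U_N+O(\dt^4)$, the averaged linear operator $(-\partial_x^4+\partial_x^2)$ acting on $\tfrac12(U_N^{n+1}+U_N^n)=U_N+\tfrac{\dt^2}{8}\partial_t^2 U_N+O(\dt^4)$, and the nonlinear extrapolation as $\tfrac32(U_N^n)^p-\tfrac12(U_N^{n-1})^p=(U_N)^p-\tfrac38\dt^2\partial_t^2((U_N)^p)+O(\dt^3)$ at $t^{n+1/2}$. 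Substituting \qref{consistency-Phi-6} collapses the $O(1)$ terms to $\tau_0$, leaving a collection of $O(\dt^2)$ terms built from $\partial_t^4 U_N$, $(-\partial_x^4+\partial_x^2)\partial_t^2 U_N$, and $\partial_x^2\partial_t^2((U_N)^p)$. Because the scheme carries $D_N^2$ on the interpolated nonlinearity, I would separately estimate $D_N^2{\cal I}_N((U_N)^p)-\partial_x^2((U_N)^p)$ in $\nrm{\cdot}_2$; this aliasing discrepancy is spectrally small by Lemma \ref{aliasing error} together with the interpolation estimates \qref{consistency-Phi-5-3}--\qref{consistency-Phi-5-5}, and is absorbed into the $h^m$ part. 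The remaining $O(\dt^2)$ terms are again converted to the discrete-in-time $\ell^2$ norm through Propositions \ref{Prop-A.0}--\ref{Prop-A.1}, with the time-Sobolev norms of $\partial_t^2 U_N$, $\partial_t^4 U_N$, and $\partial_t^2((U_N)^p)$ furnished by \qref{Bos-regularity}; summing over $n$ with weight $\dt$ (so the uniform $\tau_0\sim h^m$ survives as $O(h^m)$) yields $\nrm{\tau_1}_{\ell^2(0,T;\ell^2)}\le M(\dt^2+h^m)$.

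I expect the main obstacle to be the bookkeeping of the $O(\dt^2)$ layer rather than the leading order. The delicate point is that the equation contains a second temporal derivative, so a residual that is naively $O(\dt^2)$ before division by $\dt$ would degrade the order; one must verify that the finite-difference, midpoint-average, extrapolation, and correction-term coefficients conspire so that every surviving term is genuinely $O(\dt^2)$ after division, and that each such term is measured against a time-derivative norm no stronger than what \qref{Bos-regularity} supplies. This is exactly why Propositions \ref{Prop-A.0}--\ref{Prop-A.1} are invoked: the former converts the first-difference consistency error into $\dt^2\,\nrm{\cdot}_{H^3(0,T)}$, while the latter bounds the explicitly $\dt^2$-weighted second-difference terms by $\nrm{\cdot}_{H^2(0,T)}$. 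The secondary difficulty is keeping the nonlinear aliasing error consistent with this expansion, i.e. checking that $D_N^2{\cal I}_N$ applied to the extrapolated $p$-th power does not contaminate the temporal order, which is where Lemma \ref{aliasing error} is essential.
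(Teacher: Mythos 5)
Your proposal is correct and follows essentially the same route as the paper's proof: expansion of every term about $t^{n+1/2}$, the key coefficient cancellation $\tfrac18-\tfrac1{12}=\tfrac1{24}$ between the midpoint average of $\Psi_N$ and the centered difference of $U_N$, substitution of the spatial consistency estimate \qref{consistency-Phi-6}, Propositions \ref{Prop-A.0}--\ref{Prop-A.1} to convert the $O(\dt^2)$ layer into discrete-in-time $\ell^2$ bounds, and spectral interpolation estimates to absorb the nonlinear interpolation/aliasing discrepancy into the $h^m$ part. The only quibble is that under the stated $H^4(0,T)$ regularity the raw residual of the second equation is $O(\dt^3)$ in $\ell^2(0,T)$ (which is what the paper proves), not $O(\dt^4)$ as you claim from formal Taylor expansion, but this still gives $\tau_2 = O(\dt^2)$ after the division by $\dt$, exactly as required.
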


\begin{proof}


We define the following notation:
\begin{equation}\label{notation1}
     \begin{array}{rclrclrcl}
     F_0^{n+1/2}&=&\frac{U^{n+1}-U^{n}}{\dt}    &,&  \\ 
    F_1^{n+1/2}&=&\frac{\Psi^{n+1}-\Psi^{n}}{\Delta t}&,&\quad F_{1e}^{n+1/2}&=&(\partial_t^2 U_N)(\cdot,t^{n+1/2}),\\
    F_2^{n+1/2}&=&D_N^4 U^{n+1/2}&,&\quad F_{2e}^{n+1/2}&=&(\partial_x^4 U_N)(\cdot,t^{n+1/2}),\\
    F_3^{n+1/2}&=&D_N^2 U^{n+1/2}&,&\quad F_{2e}^{n+1/2}&=&(\partial_x^2 U_N)(\cdot,t^{n+1/2}),\\
    F_4^{n+1/2}&=&D_N^2 (\frac{3}{2}(U^p)^{n}-\frac{1}{2}(U^p)^{n-1})&,&\quad F_{4e}^{n+1/2}&=&(\partial_x^2 U_N^2)(\cdot,t^{n+1/2}),\\
    F_5^{n+1/2}&=&\frac{\Psi^{n+1}+\Psi^{n}}{2}  &.&  \\
     \end{array}
\end{equation}
Note that the quantities on the left side are defined on the numerical grid (in space) point-wise, while the ones on the right hand side are continuous functions. 


  To begin with, we look at 
the second order time derivative terms, $F_1$ and $F_{1e}$. 
From the definition (\ref{psi-approx}), we get
 \begin{equation} \label{truncation-est-f1}
    F_1^{n+1/2} =\frac{\partial_t u_N^{n+1}-\partial_t u_N^{n}}{\Delta t}-\frac{\Delta t^2}{12}\frac{\partial_t^3 u_N^{n+1}-\partial_t^3 u_N^{n}}{\Delta t} :=F_{11}^{n+1/2}-\frac{\Delta t^2}{12}F_{12}^{n+1/2},
\end{equation}
at a point-wise level, where $F_{11}$ and $F_{12}$ are the finite difference (in time) approximation to $\partial_t^2 U_N$, $\partial_t^4 U_N$, respectively.
We define $F_{11e}$ and $F_{12e}$ in a similar way as (\ref{notation1}), i.e.
\begin{equation}\label{notation3}
    F_{11e}^{n+1/2} = \partial_t^2 U_N(\cdot,t^{n+1/2}) ,  \quad  
    F_{12e}^{n+1/2} = \partial_t^4 U_N(\cdot,t^{n+1/2}) . 
\end{equation}
The following estimates
can be derived by using Proposition~\ref{Prop-A.0} (with $m=2$ and $m=0$):
\begin{eqnarray}
    \nrm{ F_{11} - F_{11e} }_{\ell^2 (0,T)}  
  \le C \Delta t^2 \nrm{ U_N}_{H^{4} (0,T)} ,  \quad 
    \nrm{ F_{12} - F_{12e} }_{\ell^2 (0,T)}  
   \le C \nrm{ U_N}_{H^{4} (0,T)} ,    \label{truncation-est-2nd-1-2}
\end{eqnarray}
for each fixed grid point. This in turn yields 
\begin{eqnarray}
  \nrm{  F_{1} - F_{1e}  }_{\ell^2 (0,T)}
  \le C \Delta t^2 \left\|  U_N  \right\|_{H^4(0,T)} .
  \label{truncation-est-2nd}
\end{eqnarray}
In turn, an application of discrete summation in $\Omega$ leads to
\begin{eqnarray}
\nrm{ F_1 - {\cal I} ( F_{1e} ) }_{\ell^2 \left(0,T; \ell^2 \right)} 
   \le C  \Delta t^2  \nrm{ U_N }_{H^4 (0,T; L^2)} 
  \le C  \Delta t^2  \nrm{ u_\exac }_{H^4 (0,T; L^2)} ,
	\label{truncation-est-2nd-1-5}
\end{eqnarray}
due to the fact that $U_N \in {\cal B}^N$, and \qref{consistency-Phi-3-2} 
was used in the second step. 

For the terms $F_2$ and $F_{2e}$, we start from the following observation
(recall that $U_N^{k+1/2} = \frac{U_N^{k+1} + U_N^k}{2}$) 

\begin{equation}\label{truncation-est-2nd-4-1}
  \nrm{F_2^{n+1/2} -  {\cal I} \left( \partial_x^4 U_N^{n+1/2} \right) }_2 \equiv 0 ,  \quad 
  \mbox{since  $U_N^{n+1/2}  \in {\cal B}^N$ } .  
\end{equation}
Meanwhile, a comparison between $U_N^{n+1/2}$ and $ U_N ( \cdot\ , t^{n+1/2} )$  shows that
\begin{equation}\label{truncation-est-2nd-4-2}
  U_N^{n+1/2} - U_N (\  \cdot\ , t^{n+1/2} )
  = \frac18 \Delta t^2 D_{t/2}^2 U_N^{n+1/2} .
\end{equation}
Meanwhile, an application of Prop.~\ref{Prop-A.1} gives
\begin{equation}\label{truncation-est-2nd-4-3}
    \nrm{D_{t/2}^2 \partial_x^4 U_N}_{\ell^2 (0,T)}\leq C \nrm{\partial_x^4 U_N}_{H^2 (0,T)},
\end{equation}
at each fixed grid point. As a result, we get
\begin{equation}\label{truncation-est-2nd-4-4}
\nrm{ F_2 - {\cal I} \left( F_{2e} \right) }_{\ell^2 \left(0,T; \ell^2 \right)}
\leq C \Delta t^2  \nrm{  u_\exac  }_{H^2 (0,T; H^4)}  .
\end{equation}

The terms $F_3$ and $F_{3e}$ can be analyzed in the same way. We have
 \begin{equation}\label{truncation-est-2nd-4-7}
  \nrm{ F_3 - {\cal I} \left( F_{3e} \right) }_{\ell^2 (0,T; \ell^2 )}
  \leq C \Delta t^2 \nrm{  u_\exac  }_{H^2 (0,T; H^2)}  .
 \end{equation}
	
For the nonlinear terms $F_4$ and $F_{4e}$, we begin with the following estimate 
\begin{equation}\label{truncation-est-2nd-4-8}
\begin{split}
 & \nrm{ F_4^{n+1/2} - {\cal I} \left( \partial_x^2\left( \frac{3}{2}(U_N^p )^n-\frac{1}{2}(U_N^p )^{n-1} \right)  \right) }_2
 \leq C h^m \nrm{ \frac{3}{2}(U_N^p )^n-\frac{1}{2}(U_N^p)^{n-1}}_{H^{m+2}} \\
& \quad \leq C h^m \left(\nrm{ U_N^n }_{H^{m+2}}^p + \nrm{ U_N^{n-1}}_{H^{m+2}}^p\right) 
  \leq C h^m \nrm{ U_N }_{L^\infty{(0,T;H^{m+2}})}^p , 
\end{split}
\end{equation}
with the first step based on the fact that $\frac{3}{2}(U_N^p )^n-\frac{1}{2}(U_N^p)^{n-1} \in {\cal B}^{p N}$. Meanwhile, the following observation
\begin{eqnarray}
   \frac{3}{2}(U_N^p )^n-\frac{1}{2}(U_N^p)^{n-1}-U_N^p (\cdot,t^{n+1/2})  
  =\frac{1}{8}\Delta t^2 D_{t/2}^2(U_N^p)-\frac{1}{2}\Delta t^2 D_{t}^2(U_N^p) 
    \label{truncation-est-2nd-3-2}
\end{eqnarray}
indicates that  
\begin{equation}\label{truncation-est-2nd-4-9}
\begin{split}
&\nrm{ {\cal I} \left(  \partial_x^2\left( \frac{3}{2}(U_N^p )^n-\frac{1}{2}(U_N^p )^{n-1}\right) - F_{4e}^{n+1/2}  \right) }_2 \\
&\qquad = \nrm{ {\cal I}  \left( \partial_x^2\left( \frac{1}{8}\Delta t^2 D_{t/2}^2(U_N^p)-\frac{1}{2}\Delta t^2 D_{t}^2(U_N^p)\right) \right) }_2  \\
&\qquad \leq \frac18 \Delta t^2\nrm{D_{t/2}^2(U_N^p) }_{H^{2+\eta}}  + \frac12 \Delta t^2\nrm{ D_{t}^2(U_N^p) }_{H^{2+\eta} } , \quad \eta > \frac12 , 
\end{split}
\end{equation}
with the last step coming from \qref{spectral-approximation}. On ther other hand, applications of Prop.~\ref{Prop-A.0}, Prop.~\ref{Prop-A.1} imply that 
\begin{eqnarray} 
  \nrm{D_{t/2}^2 (U_N^p) }_{\ell^2 (0,T; H^3)}
 \le C \nrm{U_N^p}_{H^2 (0,T; H^3)}  ,  \quad  
  \nrm{D_{t}^2 (U_N^p) }_{\ell^2 (0,T; H^3)}
 \le C \nrm{U_N^p}_{H^2 (0,T; H^3)} . \label{truncation-est-2nd-3-5}
\end{eqnarray}
Note that an $H^2$ estimate (in time) is involved with a nonlinear term $U_N^p$.
A detailed expansion in its first and second order time derivatives shows that
\begin{eqnarray}
  \partial_t (U_N^p) = p U_N^{p-1} \partial_t U_N ,  \quad
  \partial_t^2 (U_N^p) = p \left(  U_N^{p-1} \partial_t^2 U_N
  + (p-1) U_N^{p-2} ( \partial_t U_N )^2 \right) ,  \label{truncation-est-2nd-3-7}
\end{eqnarray}
which in turn leads to
\begin{eqnarray}
  \nrm{U_N^p}_{H^2 (0,T)}
  &\le&
  C \left(  \nrm{U_N}_{L^\infty (0,T)}^{p-1}  \cdot  \nrm{U_N}_{H^2 (0,T)}
  +  \nrm{U_N}_{L^\infty (0,T)}^{p-2} \cdot \nrm{U_N}_{W^{1,4} (0,T)}^2 \right)  \nonumber
\\
  &\le&
  C  \nrm{U_N}_{H^2 (0,T)}^p ,   \label{truncation-est-2nd-3-8}
\end{eqnarray}
at each fixed grid point, with an 1-D Sobolev embedding applied at the last step.
Going back to (\ref{truncation-est-2nd-3-5}) gives
\begin{eqnarray}
  \nrm{D_{t/2}^2 (U_N^p) }_{\ell^2 (0,T; H^3)} 
 \le C \nrm{U_N}_{H^2 (0,T; H^3)}^p  ,  \quad 
   \nrm{D_{t}^2 (U_N^p) }_{\ell^2 (0,T; H^3)}
 \leq C \nrm{U_N}_{H^2 (0,T; H^3)}^p . 
 \label{truncation-est-2nd-3-9}
\end{eqnarray}
A combination of (\ref{truncation-est-2nd-4-9}), (\ref{truncation-est-2nd-3-9}) and (\ref{truncation-est-2nd-4-8}) leads to the consistency estimate of the nonlinear term
\begin{eqnarray}
\nrm{ F_4 - {\cal I} \left( F_{4e} \right) }_{\ell^2 \left(0,T; \ell^2 \right)}
  \le C (\Delta t^2 + h^m) \left(\nrm{ u_\exac }_{H^2 (0,T; H^3)}^p + \nrm{ u_\exac }_{L^\infty{(0,T;H^{m+2}})}^p \right) .
	\label{truncation-est-2nd-4-13}
\end{eqnarray}

Therefore, the local truncation error estimate for $\tau_1$ is obtained by combining (\ref{truncation-est-2nd-1-5}), (\ref{truncation-est-2nd-4-4}), (\ref{truncation-est-2nd-4-7}) and (\ref{truncation-est-2nd-4-13}), 
combined with the consistency estimate \qref{consistency-Phi-6} for $U_N$. 
Obviously, constant $M$ only dependent on the exact solution $u_\exac$. 

The estimate for $\tau_2$ is very similar.
We denote the following quantity
\begin{eqnarray}
  F_{5e}^{n+1/2} = \left( \partial_t U_N + \frac{\dt^2}{24} \partial_t^3  U_N\right)
  (\  \cdot\ , t^{n+1/2} )  .  \label{truncation-est-2nd-7-1}
\end{eqnarray}
A detailed Taylor formula in time gives the following estimate:
\begin{eqnarray}
  &&
  F_0^{n+1/2} - {\cal I} \left( F_{5e}^{n+1/2} \right) = \tau_{21}^{n+1/2} , \quad 
  \mbox{with}  \, \, \, \nonumber
\\
  &&
  \left\|  \tau_{21}  \right\|_{\ell^2 (0,T)}
  \le C \Delta t^3  \left\|  U_N  \right\|_{H^4 (0,T)}
   \le C \Delta t^3  \left\|  u_\exac  \right\|_{H^4 (0,T)}  ,  \label{truncation-est-2nd-7-2}
\end{eqnarray}
at each fixed grid point. Meanwhile, from the definition of (\ref{psi-approx}), it is clear that $F_5$ has the following decomposition:
\begin{eqnarray}
   F_5^{n+1/2}
   &=& \frac{ \Psi_N^{n+1} + \Psi_N^n}{2}
   = \frac{ \partial_t U_N^{n+1} + \partial_t U_N^n}{2}
   - \frac{\dt^2}{12}  \cdot   \frac{ \partial_t^3 U_N^{n+1} + \partial_t^3 U_N^n}{2}
   \nonumber
\\
  &:=& F_{51}^{n+1/2} + F_{52}^{n+1/2} , 
   \label{truncation-est-2nd-7-3}
\end{eqnarray}
at a point-wise level. To facilitate the analysis below, we define two more quantities:
\begin{eqnarray}
  F_{51e}^{n+1/2} = \left( \partial_t U_N + \frac{\Delta t^2}{8} \partial_t^3  U_N \right)  (\  \cdot\ , t^{n+1/2} ) ,  \nonumber
  \quad  F_{52e}^{n+1/2} = - \frac{\Delta t^2}{12} \partial_t^3  U_N  (\  \cdot\ , t^{n+1/2} )
   .  \label{truncation-est-2nd-7-4}
\end{eqnarray}
A detailed Taylor formula in time gives the following estimate:
\begin{eqnarray}
  &&
  F_{51}^{n+1/2} - {\cal I} \left( F_{51e}^{n+1/2} \right) = \tau_{22}^{n+1/2} ,   \quad
  F_{52}^{n+1/2} - {\cal I} \left( F_{52e}^{n+1/2} \right) = \tau_{23}^{n+1/2}  ,  \quad
  \mbox{with}  \, \, \, \nonumber
\\
  &&
  \left\|  \tau_{22}  \right\|_{\ell^2 (0,T)}
  \le C \Delta t^3  \left\|  U_N  \right\|_{H^4 (0,T)}
   \le C \Delta t^3  \left\|  u_\exac  \right\|_{H^4 (0,T)}  ,  \label{truncation-est-2nd-7-5}
\\
  &&
  \left\|  \tau_{23}  \right\|_{\ell^2 (0,T)}
  \le C \Delta t^3  \left\|  U_N  \right\|_{H^4 (0,T)}
   \le C \Delta t^3  \left\|  u_\exac  \right\|_{H^4 (0,T)}  ,  \label{truncation-est-2nd-7-6}
\end{eqnarray}
at each fixed grid point. Consequently, a combination of
(\ref{truncation-est-2nd-7-2})-(\ref{truncation-est-2nd-7-6}) shows that
\begin{eqnarray}
  F_0^{n+1/2} - F_{5}^{n+1/2} = \tau_2^{n+1/2} , \quad 
  \mbox{with}  \, \, \,
  \left\|  \tau_{2}  \right\|_{\ell^2 (0,T)}
   \le C \Delta t^3  \left\|  u_\exac  \right\|_{H^4 (0,T)}  .  \label{truncation-est-2nd-7-7}
\end{eqnarray}
This in turn implies that
\begin{eqnarray}
  \left\| F_0 - F_{5} \right\|_{\ell^2 (0,T; \ell^2)}
  \le  C \Delta t^3  \left\|  u_\exac  \right\|_{H^4 (0,T; L^2)}  . \label{truncation-est-2nd-7-8}
\end{eqnarray}
Consequently, a discrete summation in $\Omega$ gives the second estimate in 
\qref{discrete-norm-truncation} (for $i=2$), in which the constant $M$ only dependent on the exact solution. 
The consistency analysis is thus completed.
\end{proof}

\section{The Stability and Convergence Analysis }
\setcounter{equation}{0}

  Note that the numerical solution $(u, \psi)$ of \qref{numerical} is a vector function evaluated at discrete 
grid points. Before the convergence statement of the numerical scheme,  
its continuous extension in space is introduced, defined by 
$u_{\dt,h}^k = u_N^k$,  $\psi_{\dt,h}^k = \psi_N^k$, 
in which $u_N^k , \psi_N^k \in {\cal B}^N, \forall k$, are the continuous 
version of the discrete grid functions $u^k$, $\psi^k$, 
with the interpolation formula given by (\ref{spectral-1}). 

The point-wise numerical error grid function is given by 
\begin{equation} 
  \tilde{u}_i^n = U_i^n - u_i^n  ,  \quad 
  \tilde{\psi}_i^n = \Psi_i^n - \psi_i^n  ,    \label{error function} 
\end{equation} 

To facilitate the presentation below, we denote 
$( \tilde{u}_N^n , \tilde{\psi}_N^n) \in {\cal B}^N$  
as the continuous version of the numerical solution $\tilde{u}^n$ 
and $\tilde{\psi}^n$, respectively, with the interpolation formula 
given by (\ref{spectral-1}). 

The following preliminary estimate will be used in later analysis. 
For simplicity, we assume the initial value for $u_t$ for the GB 
equation \qref{bsq} is given by $v^0 (x) = u_t (x, t=0) \equiv 0$. 
The general case can be analyzed in the same manner, 
with more details involved.

\begin{lem} \label{lemma:prelim est}
At any time step $t^k$, $k \ge 0$, we have 
\begin{eqnarray} 
   \| \tilde{u}_N^k \|_{H^2} \leq  
   C \left(  \| D_N^2 \tilde{u}^k \|_2 + h^m \right) , 
    \label{H2 est-1}  
\end{eqnarray} 
\end{lem}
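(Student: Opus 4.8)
The plan is to use the fact that $\tilde u_N^k \in {\cal B}^N$ is a trigonometric polynomial, so its full $H^2$ norm is governed mode-by-mode, and to observe that the only part of $\nrm{\tilde u_N^k}_{H^2}$ not already measured by $\nrm{D_N^2 \tilde u^k}_2$ is the zeroth (mean) mode. First I would split $\tilde u_N^k = c_k + w_k$, where $c_k := \frac1L \int_\Omega \tilde u_N^k\,dx$ is the constant mean and $w_k \in {\cal B}^N$ is mean-free. On the frequency side each nonzero mode $0<|l|\le N$ satisfies $|2\pi l/L| \ge 2\pi/L$, so a Poincar\'e-type estimate gives $\nrm{w_k}_{L^2} + \nrm{\partial_x w_k}_{L^2} \le C \nrm{\partial_x^2 w_k}_{L^2} = C \nrm{\partial_x^2 \tilde u_N^k}_{L^2}$, the last equality because differentiation annihilates the constant $c_k$. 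Hence $\nrm{w_k}_{H^2} \le C \nrm{\partial_x^2 \tilde u_N^k}_{L^2}$. To pass to the discrete norm I would use that the $(2N+1)$-point quadrature is exact on ${\cal B}^{2N}$, so the continuous and discrete $L^2$ norms of any element of ${\cal B}^N$ agree up to the fixed factor $\sqrt L$; since $D_N^2 \tilde u^k$ is by definition the grid restriction of $\partial_x^2 \tilde u_N^k \in {\cal B}^N$, this yields $\nrm{\partial_x^2 \tilde u_N^k}_{L^2} = \sqrt L\, \nrm{D_N^2 \tilde u^k}_2$, and therefore $\nrm{w_k}_{H^2} \le C \nrm{D_N^2 \tilde u^k}_2$.

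It remains to bound the constant part, whose contribution is $\nrm{c_k}_{H^2} = |c_k|\sqrt L$, and this is the main obstacle, since the mean is invisible to $D_N^2$. Here I would invoke conservation of mass for both the exact and the discrete dynamics. Integrating \qref{bsq} over $\Omega$ and using periodicity shows $\frac{d^2}{dt^2}\int_\Omega u_\exac\,dx = 0$, so under the standing assumption $v^0 \equiv 0$ the mean $\int_\Omega u_\exac(\cdot,t)\,dx$ is constant in $t$; as ${\cal P}_N$ preserves the zeroth coefficient, the discrete mean of $U^k$ equals $\frac1L \int_\Omega u_\exac^0$ for every $k$. For the scheme, every term on the right-hand side of the first line of \qref{numerical} is a discrete derivative $D_N^j(\cdot)$ and hence has vanishing discrete mean, so the discrete mean of $\psi^n$ is conserved; with $v^0 \equiv 0$ it starts and stays at zero, and the second line of \qref{numerical} then forces the discrete mean of $u^n$ to be constant in $n$ as well. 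Consequently the discrete mean of $\tilde u^k = U^k - u^k$ is independent of $k$ and equals its initial value, which is just the quadrature error between $\frac1L\int_\Omega u_\exac^0$ and the discrete mean of the initialized grid data; by the spectral accuracy of the trapezoidal rule for smooth periodic functions this is $O(h^m)$, giving $|c_k| \le C h^m$.

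Assembling the three pieces yields $\nrm{\tilde u_N^k}_{H^2} \le \nrm{c_k}_{H^2} + \nrm{w_k}_{H^2} \le C h^m + C \nrm{D_N^2 \tilde u^k}_2$, which is exactly \qref{H2 est-1}. I expect the conservation argument for the mean, rather than the frequency localization, to be the delicate step: the Poincar\'e inequality and the discrete--continuous norm equivalence are routine for band-limited functions, whereas pinning the zeroth mode down to $O(h^m)$ relies essentially on the exact mass conservation of the scheme (enabled by the mean-free structure of the $D_N^j$ terms) together with the simplifying hypothesis $v^0 \equiv 0$. If one instead tracked the mean through the error equation, the same conclusion would follow once one checks that the truncation errors $\tau_1,\tau_2$ are mean-free, but comparing the means of $U^k$ and $u^k$ directly is cleaner and avoids that detour.
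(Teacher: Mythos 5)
Your proposal is correct and follows essentially the same route as the paper's own proof: mass conservation of the exact/projected solution and of the scheme (under $v^0\equiv 0$, $\psi^0\equiv 0$) pins the mean of the error to $O(h^m)$, and the remaining part of the $H^2$ norm is controlled by $\nrm{D_N^2\tilde u^k}_2$ using the band-limited structure of $\tilde u_N^k$ — the paper compresses your explicit mean/mean-free splitting and Fourier-side Poincar\'e step into a single appeal to elliptic regularity, and merely asserts the discrete mass conservation that you verify from the mean-free property of the $D_N^j$ terms.
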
 

\begin{proof} 
  First, we recall that the exact solution to the GB equation \qref{bsq} is mass conservative, provided that $v^0 (x) = u_t (x, t=0) \equiv 0$: 
\begin{equation} 
  \int_\Omega u_e (\cdot , t)  \, d x \equiv \int_\Omega u_e (\cdot , 0)  \, d x ,  \quad \mbox{with}  \quad \forall t > 0 .  
  \label{mass conserv-1}
\end{equation} 
Since $U_N$ is the projection of $u_\exac$ into ${\cal B}^N$, as given by 
\qref{consistency-Phi-2}, we conclude that 
\begin{equation} 
   \int_\Omega U_N (\cdot , t)  \, d x
  = \int_\Omega u_e (\cdot , t)  \, d x \equiv \int_\Omega u_e (\cdot , 0)  \, d x 
  = \int_\Omega U_N (\cdot , 0)  \, d x ,  \quad \mbox{with}  \quad \forall t > 0 .  
  \label{mass conserv-2}
\end{equation} 
On the other hand, the numerical scheme (\ref{numerical}) is mass 
conservative at the discrete level, provided that $\psi^0 \equiv 0$: 
\begin{eqnarray}
   \overline{u^k} := h \sum_{i=0}^{N-1} u^k_{i}  
  \equiv \overline{u^0} = \bar{C}_0 .   
  \label{mass conserv-discrete-1}
\end{eqnarray} 
Meanwhile, for $U_N^k \in {\cal B}^N$, for any $k \ge 0$, we observe that 
\begin{eqnarray}
   \overline{U^k} = \int_\Omega U_N (\cdot , t^k)  \, d x
   \equiv \int_\Omega U_N (\cdot , 0) = \overline{U^0} .    
  \label{mass conserv-discrete-2}
\end{eqnarray} 
As a result, we arrive at an $O (h^m)$ order average for the numerical error 
function at each time step: 
\begin{eqnarray}
   \overline{\tilde{u}^k } = \overline{U^k - u^k} 
  = \overline{U^k}  - \overline{u^k}  
  = \overline{U^0}  - \overline{u^0} 
  = O (h^m)  ,  \quad \forall k \ge 0 ,
  \label{error-mean-1}
\end{eqnarray} 
which comes from the error associated with the projection. 
This is equivalent to 
\begin{eqnarray}
   \int_\Omega \tilde{u}_N^k   \, d x 
  = \overline{\tilde{u}^k }  = O (h^m)  ,  \quad \forall k \ge 0 ,
  \label{error-mean-2}
\end{eqnarray} 
with the first step based on the fact that $\tilde{u}_N^k \in {\cal B}^N$. 
As an application of elliptic regularity, we arrive at 
\begin{equation} 
   \| \tilde{u}_N^k \|_{H^2}  
  \le  C  \left(  \nrm{ \partial_x^2 \tilde{u}_N^k }_{L^2} + 
    \int_\Omega \tilde{u}_N^k   \, d x  \right) 
  \le  C  \left(  \nrm{ D_N^2 \tilde{u}^k }_2 +  h^m  \right) , 
\end{equation} 
in which the fact that $\tilde{u}_N^k \in {\cal B}^N$ was used in the last step. 
This finishes the proof of Lemma \ref{lemma:prelim est}. 
\end{proof}

  Meanwhile, for a semi-discrete function $w$ (continuous in space and discrete in time), the following norms are defined: 
\begin{equation} 
   \nrm{ w  }_{\ell^\infty (0, T^*; H^k)} = \max_{0 \leq k \leq K} \nrm{ w^k }_{H^k}, \quad 
   \mbox{for any integer $k \ge 0$ } . 
\end{equation}

Finally, we state the  main result of this paper: 

\begin{thm}
	\label{thm-error-estimate}
For any final time $T > 0$, assume the exact solution $u_\exac$ to the 
GB equation \qref{bsq} given by \qref{Bos-regularity}. 
Denote $u_{\dt,h}$ as the continuous (in space) extension of the fully discrete 
numerical solution given by scheme \ref{numerical}. 
As $\dt, h \to 0$, the following convergence result is valid:
\begin{eqnarray} 
  \left\| u_{\dt,h} - u_\exac \right\|_{\ell^\infty (0, T^*; H^2)} 
  + \nrm{ \psi_{\dt,h} - \psi_\exac }_{\ell^\infty (0, T^*; L^2)} 
  \le C \left( \dt^2 + h^m \right)  ,  
  \label{convergence-u-psi}  
\end{eqnarray}
provided that the time step $\dt$ and the space grid size $h$ are bounded by given constants which are only dependent on the exact solution. 
Note that the convergence constant in (\ref{convergence-u-psi}) 
also depend on the exact solution as well as $T$. 
	\end{thm}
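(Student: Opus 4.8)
The plan is to subtract the scheme \qref{numerical} satisfied by the numerical solution $(u,\psi)$ from the consistency identities \qref{numerical2} satisfied by the constructed profile $(U,\Psi)$, so that the error functions $\tilde u$, $\tilde\psi$ of \qref{error function} obey the system
\begin{equation*}
\frac{\tilde\psi^{n+1}-\tilde\psi^n}{\dt} = -D_N^4 \tilde u^{n+1/2} + D_N^2 \tilde u^{n+1/2} + D_N^2 \Bigl( \tfrac32 {\cal N}^n - \tfrac12 {\cal N}^{n-1} \Bigr) + \tau_1^n , \qquad \frac{\tilde u^{n+1}-\tilde u^n}{\dt} = \frac{\tilde\psi^{n+1}+\tilde\psi^n}{2} + \dt\, \tau_2^n ,
\end{equation*}
where $\tilde u^{n+1/2} = \tfrac12(\tilde u^{n+1}+\tilde u^n)$ and ${\cal N}^k = (U^k)^p - (u^k)^p$ is the grid-function nonlinear difference. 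Guided by the conserved energy $\tfrac12\|u_t\|^2 + \tfrac12\|u_{xx}\|^2 + \tfrac12\|u_x\|^2$ of the linear part of \qref{bsq}, I would introduce the discrete energy
\begin{equation*}
{\cal E}^n = \tfrac12 \nrm{\tilde\psi^n}_2^2 + \tfrac12 \nrm{D_N^2 \tilde u^n}_2^2 + \tfrac12 \nrm{D_N \tilde u^n}_2^2 ,
\end{equation*}
which, through Lemma \ref{lemma:prelim est}, controls $\nrm{\tilde u_N^n}_{H^2}$ up to the $O(h^m)$ projection error.

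The core energy step is to take the discrete inner product of the first error equation with $\frac{\tilde u^{n+1}-\tilde u^n}{\dt}$. Replacing this quantity by $\frac{\tilde\psi^{n+1}+\tilde\psi^n}{2} + \dt\,\tau_2^n$ via the second error equation produces the telescoping term $\frac{1}{2\dt}(\nrm{\tilde\psi^{n+1}}_2^2 - \nrm{\tilde\psi^n}_2^2)$ on the left, while the summation-by-parts rules \qref{spectral-inner product-2} turn the two linear operators into $-\frac{1}{2\dt}(\nrm{D_N^2\tilde u^{n+1}}_2^2 - \nrm{D_N^2\tilde u^n}_2^2)$ and $-\frac{1}{2\dt}(\nrm{D_N\tilde u^{n+1}}_2^2 - \nrm{D_N\tilde u^n}_2^2)$, so the entire left-hand side collapses to $\frac{1}{\dt}({\cal E}^{n+1} - {\cal E}^n)$ plus a single cross term $\langle \tilde\psi^{n+1}-\tilde\psi^n, \tau_2^n\rangle$. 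The point of pairing against $\frac{\tilde u^{n+1}-\tilde u^n}{\dt}$ rather than $\frac{\tilde\psi^{n+1}+\tilde\psi^n}{2}$ is precisely that the fourth-order operator telescopes cleanly and no derivative ever lands on $\tau_2$; the residual cross term, after multiplication by $\dt$ and summation, is controlled by Cauchy--Schwarz in time together with Young's inequality through the $\ell^2(0,T;\ell^2)$ truncation bound \qref{discrete-norm-truncation}, giving a contribution of size $O((\dt^2+h^m)^2)$ plus a piece absorbable into $\max_k {\cal E}^k$.

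The main obstacle is the nonlinear term $\langle D_N^2(\tfrac32{\cal N}^n - \tfrac12{\cal N}^{n-1}), \frac{\tilde u^{n+1}-\tilde u^n}{\dt}\rangle$, which, after multiplying by $\dt$ and writing $\tilde u^{n+1}-\tilde u^n = \dt(\frac{\tilde\psi^{n+1}+\tilde\psi^n}{2}+\dt\tau_2^n)$, reduces to estimating $\nrm{D_N^2 {\cal N}^k}_2$. Here I would invoke the aliasing control Lemma \ref{aliasing error}: since $U_N^k, u_N^k \in {\cal B}^N$, we have $(U_N^k)^p - (u_N^k)^p \in {\cal B}^{pN}$, so $\nrm{D_N^2 {\cal N}^k}_2 = \nrm{\partial_x^2 {\cal I}_N((U_N^k)^p-(u_N^k)^p)}_{L^2} \le \sqrt p\, \nrm{(U_N^k)^p - (u_N^k)^p}_{H^2}$. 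Factoring the difference as $\tilde u_N^k$ times a telescoping polynomial sum and using that $H^2(\Omega)$ is a Banach algebra in one dimension then yields $\nrm{D_N^2 {\cal N}^k}_2 \le C(\nrm{U_N^k}_{H^2}^{p-1} + \nrm{u_N^k}_{H^2}^{p-1}) \nrm{\tilde u_N^k}_{H^2}$. The factor $\nrm{U_N^k}_{H^2}$ is bounded by the regularity of $u_\exac$, whereas $\nrm{u_N^k}_{H^2}$ is bounded by an a priori induction hypothesis $\nrm{\tilde u_N^k}_{H^2}\le 1$; Lemma \ref{lemma:prelim est} then converts $\nrm{\tilde u_N^k}_{H^2}$ into $\nrm{D_N^2\tilde u^k}_2 + h^m$, i.e.\ into $\sqrt{{\cal E}^k}$. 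This is exactly the step the aliasing lemma makes possible and the reason the energy must be taken in $H^2$ rather than $L^2$; it is also where the one-dimensional embedding $H^2 \hookrightarrow L^\infty$ supplies the maximum-norm bound on the numerical solution directly from ${\cal E}^k$, so that no inverse inequality — and hence no scaling law between $\dt$ and $h$ — is ever needed. I expect this balancing of the aliasing lemma, the Banach-algebra product bound, and the bootstrap to be the decisive and most delicate part of the argument.

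Collecting the estimates and summing in $n$ yields a discrete Gronwall inequality ${\cal E}^{L+1} \le {\cal E}^0 + {\cal E}^1 + C\dt\sum_{k} {\cal E}^k + C(\dt^2+h^m)^2$, after absorbing the small $\epsilon\max_k{\cal E}^k$ contributions into the left-hand side. A consistent second-order initialization (with $v^0\equiv 0$) gives ${\cal E}^0 + {\cal E}^1 \le C(\dt^2+h^m)^2$, so the discrete Gronwall lemma produces $\max_k {\cal E}^k \le C(\dt^2+h^m)^2$, whence $\nrm{D_N^2\tilde u^k}_2, \nrm{\tilde\psi^k}_2 \le C(\dt^2+h^m)$ and, by Lemma \ref{lemma:prelim est}, $\nrm{\tilde u_N^k}_{H^2} \le C(\dt^2+h^m)$. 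For $\dt$ and $h$ below thresholds depending only on $u_\exac$, this bound is below $1$, which closes the induction hypothesis used in the nonlinear estimate. Finally, the triangle inequality with the projection estimates \qref{consistency-Phi-3-1} converts the bounds on $\tilde u_N$, $\tilde\psi_N$ into the claimed $\ell^\infty(0,T^*;H^2)$ convergence for $u_{\dt,h}$ and $\ell^\infty(0,T^*;L^2)$ convergence for $\psi_{\dt,h}$, establishing \qref{convergence-u-psi}.
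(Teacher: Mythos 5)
Your proposal is correct and follows essentially the same route as the paper's own proof: the same error equations, the same discrete energy $\tfrac12\bigl( \nrm{\tilde{\psi}^n}_2^2 + \nrm{D_N^2 \tilde{u}^n}_2^2 + \nrm{D_N \tilde{u}^n}_2^2 \bigr)$, testing against the time difference of $\tilde{u}$ and substituting the second error equation, the aliasing Lemma \ref{aliasing error} plus the 1-D Sobolev embedding and the a-priori $H^2$ bootstrap for the nonlinear term, Lemma \ref{lemma:prelim est} to pass between $\nrm{\tilde{u}_N}_{H^2}$ and $\nrm{D_N^2 \tilde{u}}_2$, and finally discrete Gronwall together with the projection estimate \qref{consistency-Phi-3-1}. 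The only differences are cosmetic (you apply the aliasing bound to the full difference $(U_N^k)^p-(u_N^k)^p$ rather than term by term in the factored sum, and you normalize the test function by $\dt$), so no further comparison is needed.
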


\begin{proof}
Subtracting (\ref{numerical}) from (\ref{numerical2}) yields 
\begin{eqnarray} 
 \frac{\tilde{\psi}^{n+1} - \tilde{\psi}^n }{\dt} 
 &=& -\frac12 D_N^4 ( \tilde{u}^{n+1} + \tilde{u}^n ) +\frac12 D_N^2 ( \tilde{u}^{n+1} + \tilde{u}^n )  + \tau_1^n \nonumber \\
&& + D_N^2 \left( \frac32 \tilde{u}^n 
   \sum_{k=0}^{p-1} ( U^n )^k (u^n )^{p-1-k} 
  - \frac12 \tilde{u}^{n-1}  
  \sum_{k=0}^{p-1}  ( U^{n-1} )^k (u^{n-1} )^{p-1-k}  \right) ,  \label{error1}
\\
\frac{\tilde{u}^{n+1} - \tilde{u}^n}{\Delta t}
&=& \frac{\tilde{\psi}^{n+1} + \tilde{\psi}^n }{2} + \Delta t \tau_2^n . 
\label{error2}
\end{eqnarray} 

Also note a $W^{2,\infty}$ bound for the constructed approximate solution
\begin{equation} 
  \nrm{ U_N }_{L^\infty (0,T^*; W^{2,\infty}) } \le C^* ,  \quad \mbox{i.e.}  \, \, \, 
  \left\| U_N^n \right\|_{L^\infty} \le C^* ,    \left\| (U_N)_x^n \right\|_{L^\infty} \le C^* , 
  \left\| (U_N)_{xx}^n \right\|_{L^\infty} \le C^* , 
   \label{convergence-2}
\end{equation}
for any $n \ge 0$, which comes from the regularity of the constructed solution.

\noindent
{\bf An a-priori $H^2$ assumption up to time step $t^n$.}  \, \, 
We assume a-priori that the numerical error function (for $u$) has an $H^2$ bound at time steps $t^n$, $t^{n-1}$,  
\begin{equation} 
  \left\| \tilde{u}_N^k \right\|_{H^2} \le 1 ,  \quad 
  \mbox{with $\tilde{u}_N^k = {\cal I}_N \tilde{u}^k$} ,  \quad 
  \mbox{for $k = n, n-1$} , 
  \label{convergence-a priori-1}
\end{equation} 
so that the $H^2$ and $W^{1,\infty}$ bound for the numerical solution (up to $t^n$) is available 
\begin{eqnarray} 
  &&
  \left\|  u_N^k \right\|_{H^2} 
  = \left\| U_N^k - \tilde{u}_N^k \right\|_{H^2} 
  \le \left\| U_N^k \right\|_{H^2} + \left\| \tilde{u}_N^k \right\|_{H^2}  
  \le C^* + 1 := \tilde{C}_0 , \nonumber 
\\
  && 
    \left\|  u_N^k \right\|_{W^{1,\infty} }  
    \le C  \left\|  u_N^k \right\|_{H^2}  
    \le C \tilde{C}_0 := \tilde{C}_1 , 
    \label{convergence-a priori-2}
\end{eqnarray}  
for $k=n, n-1$, with an 1-D Sobolev embedding applied at the final step.


Taking a discrete inner product with (\ref{error1}) by the error difference function $(\tilde{u}^{n+1}-\tilde{u}^{n})$  gives
\begin{equation}\label{inner2}
\begin{split}
&\left\langle \frac{\tilde{\psi}^{n+1} - \tilde{\psi}^{n}}{\Delta t},\tilde{u}^{n+1}-\tilde{u}^{n}\right\rangle
=\left\langle-\frac{1}{2}D_N^4 \left( \tilde{u}^{n+1}+\tilde{u}^n \right),\tilde{u}^{n+1} - \tilde{u}^{n} \right\rangle \\
&+\left\langle D_N^2\left( \frac32 \tilde{u}^n 
   \sum_{k=0}^{p-1} ( U^n )^k (u^n )^{p-1-k} 
  - \frac12 \tilde{u}^{n-1}  
  \sum_{k=0}^{p-1}  ( U^{n-1} )^k (u^{n-1} )^{p-1-k}  \right), 
  \tilde{u}^{n+1} - \tilde{u}^{n} \right\rangle \\
&+\left\langle \frac{1}{2} D_N^2 \left( \tilde{u}^{n+1}+\tilde{u}^n \right),\tilde{u}^{n+1} - \tilde{u}^{n} \right\rangle + \left\langle \tau_1^n, \tilde{u}^{n+1} - \tilde{u}^{n} \right\rangle .\\
\end{split}
\end{equation}
The leading term of (\ref{inner2}) can be analyzed with the help of \qref{error2}: 
\begin{eqnarray}\label{inner1}
&&\left\langle \frac{\tilde{u}^{n+1} - \tilde{u}^n}{\Delta t},\tilde{\psi}^{n+1} - \tilde{\psi}^{n} \right\rangle
= \left\langle \frac{\tilde{\psi}^{n+1}+\tilde{\psi}^{n}}{2}+\Delta t \tau_2^n,\tilde{\psi}^{n+1} - \tilde{\psi}^{n} \right\rangle \nonumber\\
&=&\frac{1}{2}\left( \nrm{\tilde{\psi}^{n+1}}_2^2 - \nrm{\tilde{\psi}^{n}}_2^2 \right) + \Delta t \left\langle \tau_2^n, \tilde{\psi}^{n+1}-\tilde{\psi}^{n} \right\rangle \nonumber\\
&\geq& \frac{1}{2}\left( \nrm{\tilde{\psi}^{n+1}}_2^2 -  \nrm{\tilde{\psi}^{n}}_2^2 \right) - \frac{1}{2} \Delta t\nrm{\tau_2^n}_2^2 - \Delta t\left( \nrm{\tilde{\psi}^{n+1}}_2^2 + \nrm{\tilde{\psi}^{n}}_2^2 \right).
\end{eqnarray}
The first term on the right hand side of (\ref{inner2}) can be estimated as follows.
\begin{equation}\label{first}
\begin{split}
    \left\langle-\frac{1}{2}D_N^4( \tilde{u}^{n+1}+\tilde{u}^n ),\tilde{u}^{n+1} - \tilde{u}^{n} \right\rangle
    &=-\frac{1}{2} \left\langle D_N^2( \tilde{u}^{n+1} + \tilde{u}^n ), 
D_N^2 ( \tilde{u}^{n+1} - \tilde{u}^{n} ) \right\rangle\\
    &=-\frac{1}{2} \left( \nrm{D_N^2 \tilde{u}^{n+1}}_2^2 
- \nrm{D_N^2 \tilde{u}^{n}}_2^2 \right).
\end{split}
\end{equation}
A similar analysis can be applied to the third term on the right hand side of (\ref{inner2})
\begin{equation}\label{third}
\begin{split}
    \left\langle\frac{1}{2} D_N^2 ( \tilde{u}^{n+1} + \tilde{u}^n ),
\tilde{u}^{n+1} - \tilde{u}^{n} \right\rangle
    &=-\frac{1}{2} \left\langle D_N ( \tilde{u}^{n+1} + \tilde{u}^n ), 
D_N (\tilde{u}^{n+1} - \tilde{u}^{n} ) \right\rangle\\    
&=-\frac{1}{2} \left( \nrm{D_N \tilde{u}^{n+1} }_2^2 
- \nrm{D_N \tilde{u}^{n} }_2^2 \right).
\end{split}
\end{equation}
The inner product associated with the truncation error can be handled 
in a straightforward way: 
\begin{eqnarray}  \label{convergence truncation}  
   \left\langle \tau_1^n, \tilde{u}^{n+1} - \tilde{u}^{n} \right\rangle 
  &=& \frac12 \dt \left\langle \tau_1^n, 
  \tilde{\psi}^{n+1} + \tilde{\psi}^{n} \right\rangle 
  +  \dt^2  \left\langle \tau_1^n, \tau_2^n \right\rangle    \nonumber 
\\
   &\le& \frac12 \left( \nrm{ \tilde{\psi}^{n+1} }_2^2 
  + \nrm{ \tilde{\psi}^{n} }_2^2 \right)  
  +  \frac12 \dt \nrm{ \tau_1^n }_2^2  
  + \frac12 \dt^2  \nrm{ \tau_2^n }_2^2 , 
\end{eqnarray} 
with the error equation (\ref{error2}) applied in the first step.  

For nonlinear inner product, we start from the following decomposition of 
the nonlinear term: 
\begin{eqnarray} 
  {\cal NLT} = {\cal NLT}^1 + {\cal NLT}^2 ,  \quad \mbox{with} \, \, \, 
  &&  
    {\cal NLT}^1 = \frac32 \tilde{u}^n 
   \sum_{k=0}^{p-1} ( U^n )^k (u^n )^{p-1-k}  ,  \nonumber 
\\
  && 
  {\cal NLT}^2 = - \frac12 \tilde{u}^{n-1}  
  \sum_{k=0}^{p-1}  ( U^{n-1} )^k (u^{n-1} )^{p-1-k}  . 
  \label{nonlinear decomposition} 
\end{eqnarray} 
For ${\cal NLT}^1$, we observe that each term appearing in its expansion 
can be written as a discrete interpolation form: 
\begin{eqnarray} 
    \tilde{u}^n ( U^n )^k (u^n )^{p-1-k} 
  = {\cal I}  \left( \tilde{u}_N^n ( U_N^n )^k (u_N^n )^{p-1-k}  \right) ,  
  \, \, \,  0 \le k \le p-1 , 
  \label{convergence-nonlinear-1-1} 
\end{eqnarray} 
so that the following equality is valid: 
\begin{eqnarray} 
    \nrm{ D_N^2 \left( \tilde{u}^n ( U^n )^k (u^n )^{p-1-k}  \right) }_2 
  = \nrm{ \partial_x^2 \left( {\cal I}_N  
  \left( \tilde{u}_N^n ( U_N^n )^k (u_N^n )^{p-1-k}  \right)  \right)  }_{L^2}  . 
  \label{convergence-nonlinear-1-2} 
\end{eqnarray} 
On ther other hand, we see that $\tilde{u}_N^n ( U_N^n )^k (u_N^n )^{p-1-k}  
\in {\cal B}^{p N}$ (for each $0 \le k \le p-1$), so that an application of 
Lemma \ref{aliasing error} gives 
\begin{eqnarray} 
   \nrm{ \partial_x^2 \left(  {\cal I}_N  
  \left( \tilde{u}_N^n ( U_N^n )^k (u_N^n )^{p-1-k}  \right)  \right)  }_{L^2}  
  \le \sqrt{p} \nrm{ \tilde{u}_N^n ( U_N^n )^k (u_N^n )^{p-1-k}  }_{H^2}  . 
  \label{convergence-nonlinear-2} 
\end{eqnarray} 
Meanwhile, a detailed expansion for $\partial_x^j 
\left( \tilde{u}_N^n ( U_N^n )^k (u_N^n )^{p-1-k}  \right)$ (for $0 \le j \le 2$) 
implies that 
\begin{eqnarray} 
    \nrm{ \partial_x^j 
  \left( \tilde{u}_N^n ( U_N^n )^k (u_N^n )^{p-1-k}  \right)  }_{L^2} 
  \le  C  \left( \nrm{U_N^n }_{H^2}^{p-1}   
  + \nrm{u_N^n }_{H^2}^{p-1} + 1 \right) \nrm{ \tilde{u}_N^n }_{H^2} ,  
  \quad 0 \le j \le 2 , 
  \label{convergence-nonlinear-3} 
\end{eqnarray} 
with repeated applications of 1-D Sobolev embedding, H\"older inequality 
and Young inequality. Furthermore, a substitution of 
the bound \qref{convergence-2} for the constructed solution $U_N$ 
and the a-priori assumption \qref{convergence-a priori-1} into 
\qref{convergence-nonlinear-2} leads to 
\begin{eqnarray} 
    \nrm{ \tilde{u}_N^n ( U_N^n )^k (u_N^n )^{p-1-k}  }_{H^2} 
  \le  C  \left(  (C^*)^{p-1}   + ( \tilde{C}_1 )^{p-1} + 1 \right) 
  \nrm{ \tilde{u}_N^n }_{H^2} . 
  \label{convergence-nonlinear-4} 
\end{eqnarray}  
In turn, a combination of \qref{convergence-nonlinear-1-2}, 
\qref{convergence-nonlinear-2} and \qref{convergence-nonlinear-4} 
implies that 
\begin{eqnarray} 
    \nrm{ D_N^2 \left( \tilde{u}^n ( U^n )^k (u^n )^{p-1-k}  \right) }_2 
  \le  C  \left(  (C^*)^{p-1}   + ( \tilde{C}_1 )^{p-1} + 1 \right) 
  \nrm{ \tilde{u}_N^n }_{H^2} . 
  \label{convergence-nonlinear-5} 
\end{eqnarray} 
This bound is valid for any $0 \le k \le p-1$. 
As a result, going back to \qref{nonlinear decomposition}, we get 
\begin{eqnarray} 
    \nrm{ D_N^2 \left( {\cal NLT}^1 \right) }_2 
  \le  \tilde{C}_2 \nrm{ \tilde{u}_N^n }_{H^2} ,  \quad \mbox{with}  \, \, \, 
  \tilde{C}_2 =  C  \left(  (C^*)^{p-1}   + ( \tilde{C}_0 )^{p-1} + 1 \right)  . 
  \label{convergence-nonlinear-6-1} 
\end{eqnarray} 
A similar analysis can be performed to ${\cal NLT}^2$ so that we have 
\begin{eqnarray} 
    \nrm{ D_N^2 \left( {\cal NLT}^2 \right) }_2 
  \le  \tilde{C}_2 \nrm{ \tilde{u}_N^{n-1} }_{H^2} . 
  \label{convergence-nonlinear-6-2} 
\end{eqnarray} 
These two estimates in turn lead to 
\begin{eqnarray} 
     \nrm{ D_N^2 \left( {\cal NLT} \right) }_2 
    = \nrm{ D_N^2 \left( {\cal NLT}^1 \right) }_2  
   + \nrm{ D_N^2 \left( {\cal NLT}^2 \right) }_2 
  \le  \tilde{C}_2 \left( \nrm{ \tilde{u}_N^n }_{H^2} 
  +  \nrm{ \tilde{u}_N^{n-1} }_{H^2} \right) . 
  \label{convergence-nonlinear-6-3} 
\end{eqnarray} 
Consequently, the nonlinear inner product can be analyzed as 
\begin{eqnarray} 
  \hspace{-0.35in} && 
    \left\langle D_N^2 \left( {\cal NLT} \right) , 
  \tilde{u}^{n+1} - \tilde{u}^{n} \right\rangle  
  \le   \dt \nrm{ D_N^2 \left( {\cal NLT} \right) }_2  
  \cdot  \nrm{ \frac{\tilde{u}^{n+1} - \tilde{u}^{n} }{\dt} }_2  \nonumber
\\
  \hspace{-0.35in} &\le& 
     \tilde{C}_2 \dt \left( \nrm{ \tilde{u}_N^n }_{H^2} 
  +  \nrm{ \tilde{u}_N^{n-1} }_{H^2} \right)  
  \cdot  \left( \frac12 \left( \nrm{ \tilde{\psi}^{n+1} }_2 
  + \nrm{ \tilde{\psi}^{n} }_2 + \dt \nrm{ \tau_2^n }_2  
  \right)  \right)    \nonumber 
\\
  \hspace{-0.35in} &\le& 
     C \tilde{C}_2 \dt \left( \nrm{ \tilde{u}_N^n }_{H^2}^2  
  +  \nrm{ \tilde{u}_N^{n-1} }_{H^2}^2  
  + \nrm{ \tilde{\psi}^{n+1} }_2^2  
  + \nrm{ \tilde{\psi}^{n} }_2^2  \right) 
  + C \dt^3 \nrm{ \tau_2^n }_2^2  \nonumber 
\\
   \hspace{-0.35in} &\le& 
     C \tilde{C}_2 \dt \left( \nrm{ D_N^2 \tilde{u}^n }_2^2  
  +  \nrm{ D_N^2 \tilde{u}^{n-1} }_2^2  
  + \nrm{ \tilde{\psi}^{n+1} }_2^2  
  + \nrm{ \tilde{\psi}^{n} }_2^2  \right) 
  + C \dt^3 \nrm{ \tau_2^n }_2^2  
  + C \dt h^{2m} , 
  \label{convergence-nonlinear-7} 
\end{eqnarray} 
in which the preliminary estimate \qref{H2 est-1}, given by 
Lemma \ref{lemma:prelim est}, was applied in the last step. 

Therefore, a substitution of (\ref{first}), (\ref{third}), 
\qref{convergence truncation} and (\ref{convergence-nonlinear-7}) 
into (\ref{inner2}) results in 
 \begin{eqnarray}
\tilde{E}^{n+1} - \tilde{E}^n 
  &\leq& \tilde{C}_3 \dt \left( \nrm{ D_N^2 \tilde{u}^{n} }_2^2  
  +  \nrm{ D_N^2 \tilde{u}^{n-1} }_2^2 
  + \nrm{ \tilde{\psi}^{n+1} }_2^2 
  + \nrm{ \tilde{\psi}^n }_2^2 \right)  \nonumber 
\\
  && 
  + C \dt \left( \nrm{\tau_1^n}_2^2 + \nrm{\tau_2^n}_2^2 \right)  \nonumber 
\\
  &\leq& 
   C \dt \left( \tilde{E}^{n} + \tilde{E}^{n+1} \right) 
  + C M^2 ( \dt^2+ h^m )^2 , 
\end{eqnarray}
with $\tilde{C}_3 = C \tilde{C}_2$, with an introduction of 
a modified energy for the error function
\begin{eqnarray*}
 \tilde{E}^n =\frac12 \left( \nrm{ \tilde{\psi}^n }_2^2 
  + \nrm{D_N^2 \tilde{u}^n }_2^2 + \nrm{D_N \tilde{u}^n}_2^2 \right) .
\end{eqnarray*}
As a result, an application of discrete Grownwall inequality gives
\begin{equation}
 \tilde{E}^l \leq \tilde{C}_4  (\dt^2+ h^m)^2 ,  \quad \forall 0 \le l \le K , 
\end{equation}
which is equivalent to the following convergence result: 
\begin{equation}
   \nrm{ \tilde{\psi}^l }_2 + \nrm{ \tilde{u}_N^l }_{H^2}  
  \leq \tilde{C}_4  (\dt^2+ h^m) ,  \quad \forall 0 \le l \le K . 
  \label{convergence-4}  
\end{equation}

\noindent
{\bf Recovery of the $H^2$ a-priori bound 
\qref{convergence-a priori-1}.}   \, \, 
With the help of the $\ell^\infty (0,T; H^2)$ error estimate 
(\ref{convergence-4}) for the variable $u$, we see that the a-priori $H^2$ bound 
(\ref{convergence-a priori-1}) is also valid for the numerical error function 
$\tilde{u}_N$ at time step $t^{n+1}$, provided that 
\begin{eqnarray} 
  \dt \le \left( \tilde{C}_4 \right)^{-\frac12} ,  \quad  
  h \le \left( \tilde{C}_4 \right)^{-\frac{1}{m}} , \quad 
\mbox{with $\tilde{C}_6$ dependent on $T$} .    
  \nonumber  
\end{eqnarray}
This completes the convergence analysis, 
$\ell^\infty (0,T^*; H^2)$ for $u$, and 
$\ell^\infty (0,T^*; \ell^2)$ for $\psi$. 

  Moreover, a combination of \qref{convergence-4} and the classical 
projection \qref{consistency-Phi-3-1} leads to \qref{convergence-u-psi}. 
The proof of Theorem \ref{thm-error-estimate} is finished. 
\end{proof}

%

\begin{rem} 
  One well-known challenge in the nonlinear analysis of pseudospectral 
schemes comes from the aliasing errors. 
For the nonlinear error terms appearing in 
\qref{nonlinear decomposition}, it is clear that any classical approach 
would not be able to give a bound for its second order order derivative 
in a pseudospectral set-up. However, with the help of the aliasing error 
control estimate given by Lem.~\ref{aliasing error}, we could obtain 
an estimate for its discrete $H^2$ norm; see the detailed derivations 
in \qref{convergence-nonlinear-1-1}-\qref{convergence-nonlinear-7}. 

  This technique is the key point in the establishment of a high order 
convergence analysis, $\ell^\infty (0,T^*; H^2)$ for $u$, and 
$\ell^\infty (0,T^*; \ell^2)$ for $\psi$.  Without such an aliasing error 
control estimate, only an $\ell^\infty (0,T^*; \ell^2)$ convergence for $u$, 
and $\ell^\infty (0,T^*; \ell^2)$ convergence can be obtained for $\psi$, 
at the theoretical level; see the detailed discussions in an earlier work 
\cite{Fru}. 
In addition, a severe time step constraint, $\dt \le C h^2$, has to be 
imposed to ensure a convergence in that approach, 
compared to the unconditional convergence established in this article. 
\end{rem}

\section{Numerical Results}\label{sec-numerical}
\setcounter{equation}{0}

In this section we perform a numerical accuracy check for the fully discrete pseudospectral scheme \qref{numerical}. Similar to \cite{Fru}, the exact solitary wave solution of the GB equation (with $p=2$) is given by 
\begin{equation} 
  u_\exac (x,t) = - A \mbox{sech}^2 \left( \frac{P}{2} ( x - c_0 t) \right) , 
  \label{AC-1} 
\end{equation} 
in which $0 < P \le 1$. In more detail, the amplitude $A$, the wave speed $c_0$ and the real parameter $P$ satisfy 
\begin{equation} 
  A = \frac{3 P^2}{2} ,  \quad  c = \left( 1 - P^2 \right)^{1/2} . 
  \label{AC-2} 
\end{equation} 

Since the exact profile \qref{AC-1} decays exponentially as $| x | \to \infty$, it is natural to apply Fourier pseudospectral approximation on an interval $(-L, L)$, with $L$ large enough. In this numerical experiment, we set the computational domain as $\Omega = (-40, 40)$.  A moderate amplitude $A=0.5$ is chosen in the test. 

\subsection{Spectral convergence in space}

To investigate the accuracy in space, we fix $\dt =10^{-4}$ so that the temporal numerical error is negligible. We compute solutions with grid sizes $N=32$ to $N=128$ in increments of 8, and we solve up to time $T=4$.  The following numerical errors at this final time
\begin{equation} 
  \nrm{ \psi - \psi_\exac}_2 ,  \quad \mbox{and} \quad  
  \nrm{ D_N^2 ( u - u_\exac ) }_2 , 
  \label{AC-3} 
\end{equation} 
are presented in Fig.~\ref{fig1}. The spatial spectral accuracy is apparently observed for both $u$ and $\psi=u_t$. Due to the fixed time step $\dt = 10^{-4}$, a saturation of spectral accuracy appears with an increasing $N$. 

	\begin{figure}
	\begin{center}
\includegraphics[width=4.0in]{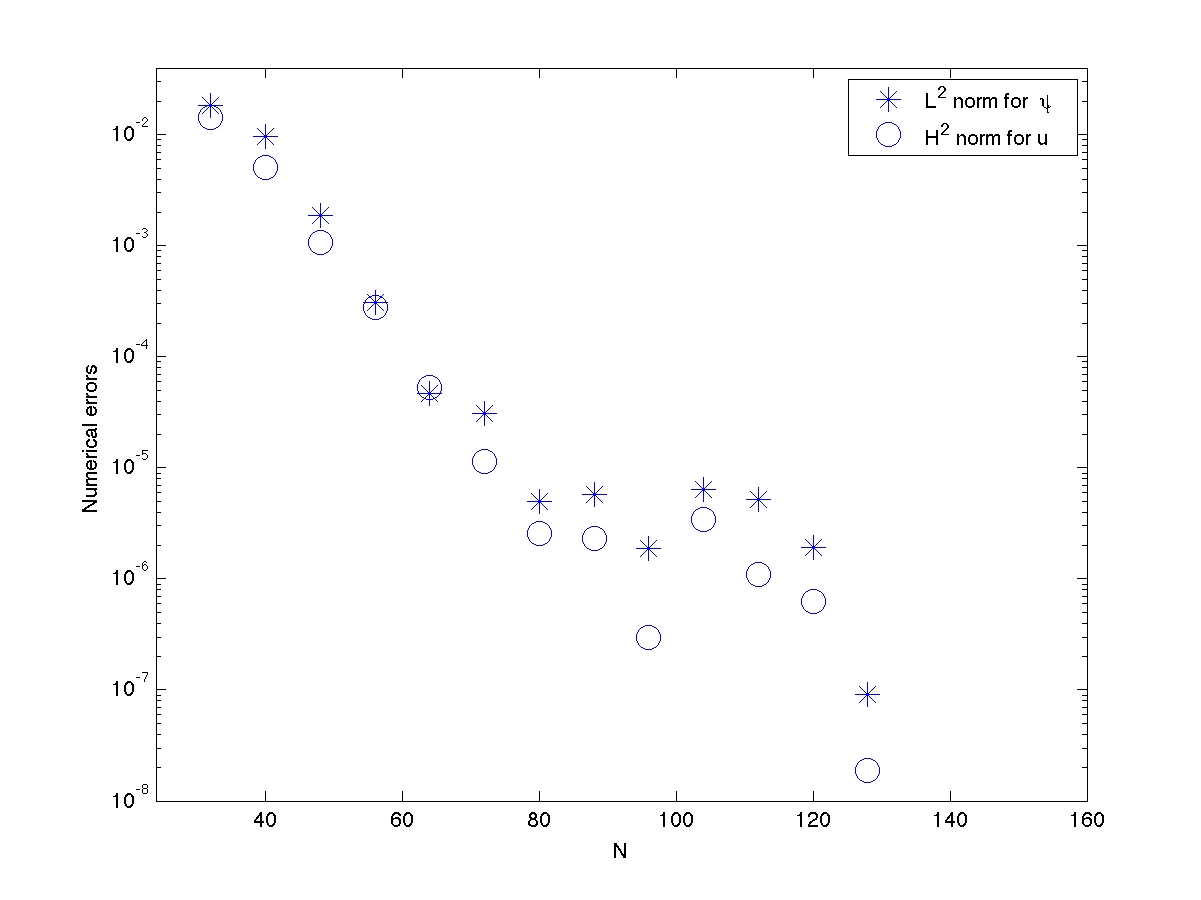}
	\end{center}
\caption{Discrete $L^2$ numerical errors for $\psi=u_t$ and $H^2$ numerical errors for $u$ at $T=4.0$, plotted versus $N$, the number of spatial grid point, for the fully discrete pseudospectral scheme (\ref{numerical}).  The time step size is fixed as $\dt = 10^{-4}$. An apparent spatial spectral accuracy is observed for both variables.}     
	\label{fig1}
	\end{figure}

\subsection{Second order convergence in time}

To explore the temporal accuracy, we fix the spatial resolution as $N=512$ so that the numerical error is dominated by the temporal ones. We compute solutions with a sequence of time step sizes, $\dt = \frac{T}{N_K}$, with $N_K=100$ to $N_K=1000$ in increments of 100, and $T=4$. Fig.~\ref{fig2} shows the discrete $L^2$ and $H^2$ norms of the errors between the numerical and exact solutions, for $\psi = u_t$ and $u$, respectively. A clear second order accuracy is observed for both variables.

	\begin{figure}
	\begin{center}
\includegraphics[width=4.0in]{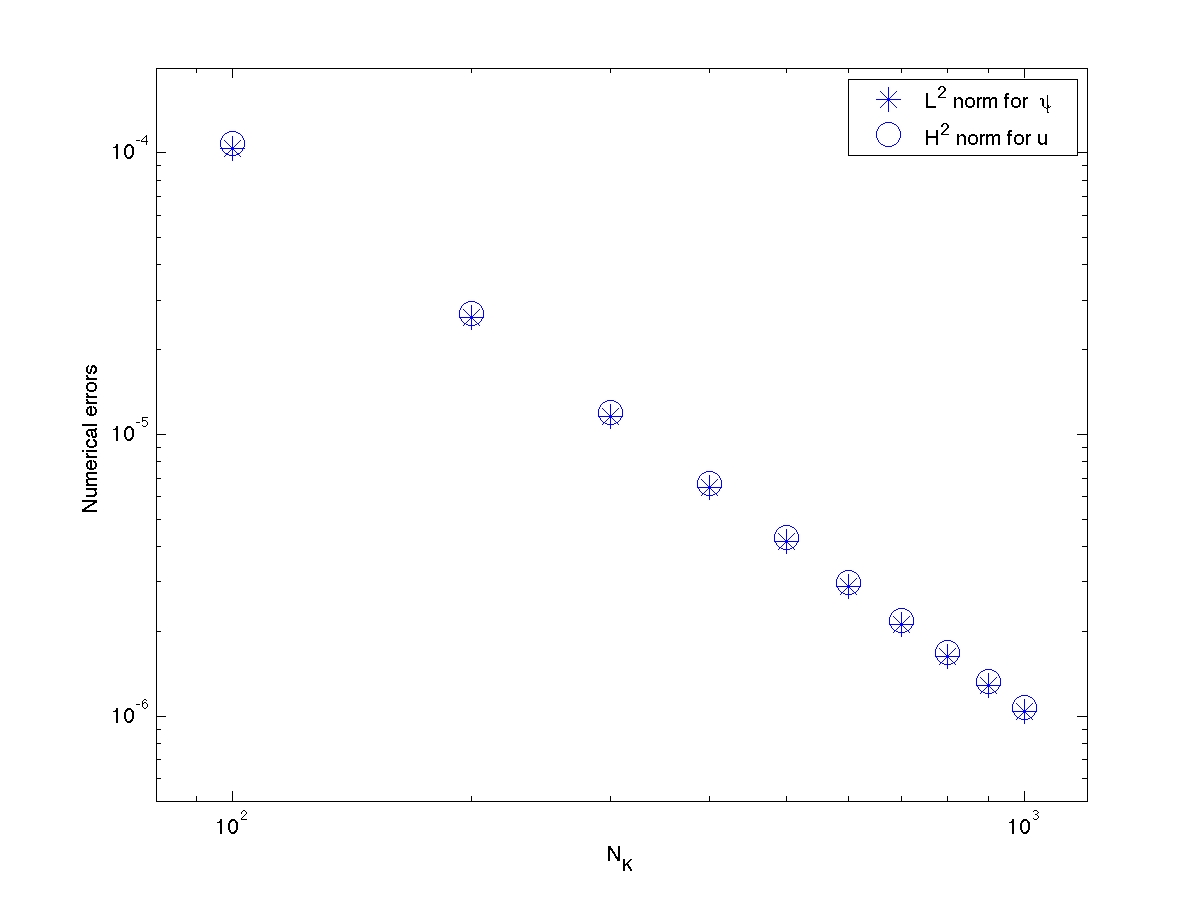}
	\end{center}
\caption{Discrete $L^2$ numerical errors for $\psi=u_t$ and $H^2$ numerical errors for $u$ at $T=4.0$, plotted versus $N_K$, the number of time steps, for the fully discrete pseudospectral scheme (\ref{numerical}).  The spatial resolution is fixed as $N=512$. The data lie roughly on curves $CN_K^{-2}$, for appropriate choices of $C$, confirming the full second-order temporal accuracy of the proposed scheme.}     
	\label{fig2}
	\end{figure}

\section{Conclusion Remarks}\label{sec-conclusions}
In this article, we propose a fully discrete Fourier pseudospectral scheme for the GB equation \qref{bsq} with second order temporal accuracy. The nonlinear stability and convergence analysis are provided in detail. In particular, with the help of an aliasing error control estimate (given by Lem.~\ref{aliasing error}), an $\ell^\infty (0,T^*; H^2)$ error estimate for $u$ and $\ell^\infty (0,T^*; \ell^2)$ error estimate for $\psi=u_t$ are derived. Moreover, an introduction of an intermediate variable $\psi$ greatly improves the numerical stability condition; an unconditional convergence (for the time step $\dt$ in terms of the spatial grid size $h$) is established in this article, compared with a severe time step constraint $\dt \le C h^2$, reported in an earlier literature \cite{Fru}. A simple numerical experiment also verifies this unconditional convergence, second order accurate in time and spectrally accurate in space.

\section*{Acknowledgements}
The authors greatly appreciate many helpful discussions with Panayotis Kevrekidis, in particular for his insightful suggestion and comments.
This work is supported in part by the the Air Force Office of Scientific Research FA-9550-12-1-0224 (S.~Gottlieb), NSF DMS-1115420 (C.~Wang), NSFC 11271281 (C.~Wang).

\end{document}